\newtheorem{theorem}{Theorem}
\newtheorem{lemma}[theorem]{Lemma}
\newtheorem{corollary}[theorem]{Corollary}
\begin{document}

\author{Skyler Simmons}
\address{Department of Mathematics, Utah Valley University, Orem, UT 84058}
\email{skyler.simmons@uvu.edu}

\title{The Octahedral Collision-Based Periodic Orbit in the Three-Dimensional Six-Body Problem}

\begin{abstract}
We construct a highly-symmetric periodic orbit of six bodies in three dimensions.  In this orbit, binary collisions occur at the origin in a regular periodic fashion, rotating between pairs of bodies located on the coordinate axes.  Regularization of the collisions in the orbit is achieved by an extension of the Levi-Civita method.  Initial conditions for the orbit are found numerically.  In contrast to an earlier periodic collision-based orbit in three dimensions, this orbit is shown to be unstable.
\end{abstract}

\keywords{$n$-body problem, binary collision, regularization, linear stability}
\subjclass[2020]{Primary 70F16, Secondary 37N05, 37J25, 70F10}

\maketitle

\section{Introduction}
In the \textit{Principia Mathematica} (see \cite{bibNewton}), Newton gives mathematical equations governing the motion of point masses within their mutual gravitational field.  Specifically, for $n$ point masses in $\mathbb{R}^d$ located at $\mathbf{x}_i$ with mass $m_i$ for $i = 1, 2, ..., n$, we have that
\begin{equation}
\label{NewtonEquation}
m_i\ddot{\mathbf{x}}_i = \sum_{i \neq j} \frac{Gm_im_j}{|\mathbf{x}_i - \mathbf{x}_j|^2} \left(\frac{\mathbf{x}_i - \mathbf{x}_j}{|\mathbf{x}_i - \mathbf{x}_j|}\right).
\end{equation}
Here, the dot represents the derivative with respect to time, and $G$ is a constant.  A suitable choice of units gives $G = 1$, which is often assumed for mathematical simplicity.  (In SI units, the US National Institute of Standards and Technology\footnote{https://physics.nist.gov/cuu/Constants/index.html} gives the value $G = 6.67430 \times 10^{-11} \text{m}^3 / \text{kg} \cdot \text{s}^2$, with a standard uncertainty of $0.00015 \times 10^{-11} \text{m}^3 / \text{kg} \cdot \text{s}^2$.)\\

\textit{Collision singularities} of the $n$-body problem occur when $\mathbf{x}_i = \mathbf{x}_j$ for some $i \neq j$.  Under suitable conditions, collisions of two bodies can be regularized.  \textit{Regularization} involves a change of temporal and spatial variables so that the collision point becomes a regular point for the resulting ODEs.  Collision singularities have received a great deal of study.  Of particular note is a result by McGehee \cite{bibMcGehee1}, which shows that in general, a collision of three or more bodies cannot be regularized. \\

Many periodic orbits featuring collisions have been produced.  Existence, stability, and other properties of periodic orbits with three bodies in one spatial dimension are studied in both analytical and numerical contexts as early as 1956 in \cite{bibJS} and as recently as 2019 in \cite{bibKTXY}.  Works between these years include \cite{bibHE}, \cite{bibHM}, \cite{bibST1}, \cite{bibMoeckel}, \cite{bibVE}, \cite{bibST2}, \cite{bibST3}, \cite{bibShib}, \cite{bibYan3}, and \cite{bibOY}.  Orbits with four bodies in one spatial dimension are featured in \cite{bibShib}, \cite{bibMartinez}, \cite{bibHuang} and \cite{bibYan1}.  Orbits in two spatial dimensions featuring collisions were studied as early as 1979 in \cite{bibBroucke} and as recently as 2021 in \cite{bibSim}, with other notable works including \cite{bibRoySteves}, \cite{bibBOYSR}, \cite{bibSSS}, \cite{bibBOYS}, \cite{bibWaldvogel}, \cite{bibBMS}, \cite{bibOY3}, \cite{bibYan2}, and \cite{bibBS1}.  Additionally, in \cite{bibShib} and \cite{bibMartinez}, large families of highly-symmetric orbits are given in one, two and three dimensions, all of which can be expressed in two degrees of freedom. Three-dimensional restricted collision-based orbits are studied in \cite{bibMISC}, \cite{bibBV}, \cite{bibBDV}, and \cite{bibGPSV} as a case of the $e = 1$ Sitnikov problem, which can be reduced to a time-dependent two-degree-of-freedom problem.\\

This paper is similar in content to \cite{bibSimCubic}.  Indeed, many of the arguments presented herein are identical, or nearly identical, to those in \cite{bibSimCubic}.  We study a three-degree of freedom, highly-symmetric, periodic orbit of six bodies featuring collisions.  Two bodies lie on each of the three coordinate axes in $\mathbb{R}^3$, symetrically located relative to the origin.  Overall, the six bodies form the vertices of an octahedron with eight congruent (often scalene) triangular faces at all points in time.  Each body collides in turn with its ``companion'' body lying on the same coordinate axis.  \\

The remainder of the paper is as follows: In Section \ref{secSetting}, we set up and regularize the Hamiltonian that corresponds to the configuration being considered.  Section \ref{secPeriodicOrbit} details the construction of the periodic orbit.  We first describe the orbit in the regularized setting.  Then, we analytically establish sufficient conditions for the orbit to exist.  Finally, we complete the existence proof with some numerical calculations. \\

Section \ref{secStabSim} discusses the linear stability of the orbit.  We first review some preliminary details of stability.  Next, we establish notation for the symmetries of the orbit.  We next detail some results by Roberts in \cite{bibRoberts8} that allow us to determine the linear stability of the orbit in a rigorous numerical fashion in terms of these symmetries.  Applications to the orbit under consideration are detailed after each result.  Finally, in Section \ref{secStabRes}, we give results of the numerical stability calculation established in the previous section and show that this orbit is linearly unstable. \\

\section{The Hamiltonian Setting and Regularization}
\label{secSetting}

\subsection{Configuration}
\label{subConfig}
We consider the Newtonian 6-body problem with point unit masses located at $(\pm q_1, 0, 0)$, $(0, \pm q_2, 0)$, and $(0,0,\pm q_3)$ (see Figure \ref{OctahedralConfig}).  At all times where $q_1q_2q_3 \neq 0$, the bodies lie at the vertices of an octahedron with 8 congruent (and usually scalene) triangular faces.  We will accordingly refer to this as the \textit{octahedral configuration}. \\

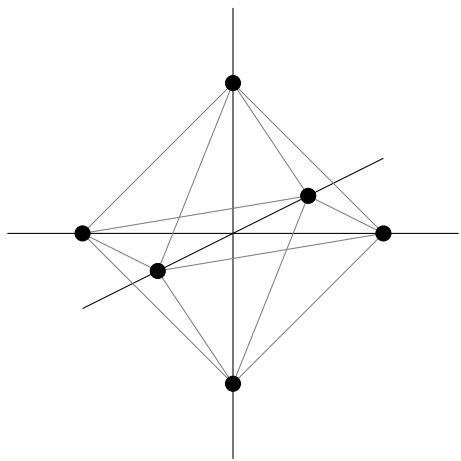
\begin{figure}[h]

\begin{tikzpicture}[scale = 1]
\draw[-] (-3,0) -- (3,0);
\draw[-] (0,-3) -- (0,3);
\draw[-] (-2,-1)--(2,1);
\draw[-, gray] (-2,0)--(0,-2);
\draw[-, gray] (2,0)--(0,2);
\draw[-, gray] (2,0)--(0,-2);
\draw[-, gray] (-2,0)--(0,2);
\draw[-, gray] (-2,0)--(-1,-.5);
\draw[-, gray] (-2,0)--(1,.5);
\draw[-, gray] (2,0)--(-1,-.5);
\draw[-, gray] (2,0)--(1,.5);
\draw[-, gray] (0,-2)--(-1,-.5);
\draw[-, gray] (0,-2)--(1,.5);
\draw[-, gray] (0,2)--(-1,-.5);
\draw[-, gray] (0,2)--(1,.5);

\draw[fill] (-2,0) circle [radius = 0.1];
\draw[fill] (2,0) circle [radius = 0.1];
\draw[fill] (0,-2) circle [radius = 0.1];
\draw[fill] (0,2) circle [radius = 0.1];
\draw[fill] (-1,-.5) circle [radius = 0.1];
\draw[fill] (1,.5) circle [radius = 0.1];
\end{tikzpicture}

\caption{The Octahedral Configuration}
\label{OctahedralConfig}
\end{figure} 

Note that for distinct subscripts $i, j, k \in \{1,2,3\}$, when $q_i = 0$, if $q_jq_k \neq 0$, a binary collision occurs at the origin.  We seek an orbit in which binary collisions rotate through the pairs of bodies on the $x$, $y$, and $z$ axes in a periodic fashion as pictured in Figure \ref{PeriodicOrbitFigure}.  Moreover, the amount of time between successive collisions is the same throughout. \\

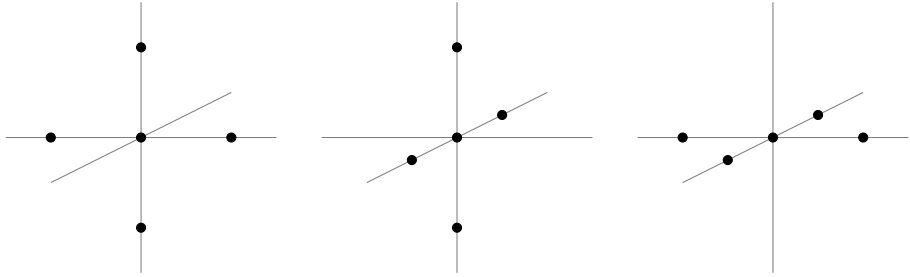
\begin{figure}[h]
\begin{tikzpicture}[scale = .6]

%%First collision 
\draw[-, gray] (-10,0) -- (-4,0);
\draw[-, gray] (-7,-3) -- (-7,3);
\draw[-, gray] (-9,-1)--(-5,1);
\draw[fill] (-9,0) circle [radius = 0.1];
\draw[fill] (-5,0) circle [radius = 0.1];
\draw[fill] (-7,-2) circle [radius = 0.1];
\draw[fill] (-7,2) circle [radius = 0.1];
\draw[fill] (-7,0) circle [radius = 0.1];

%%Second collision
\draw[-, gray] (-3,0) -- (3,0);
\draw[-, gray] (0,-3) -- (0,3);
\draw[-, gray] (-2,-1)--(2,1);
\draw[fill] (0,-2) circle [radius = 0.1];
\draw[fill] (0,2) circle [radius = 0.1];
\draw[fill] (-1,-.5) circle [radius = 0.1];
\draw[fill] (1,.5) circle [radius = 0.1];
\draw[fill] (0,0) circle [radius = 0.1];

%%Third collision
\draw[-, gray] (4,0) -- (10,0);
\draw[-, gray] (7,-3) -- (7,3);
\draw[-, gray] (5,-1)--(9,1);
\draw[fill] (9,0) circle [radius = 0.1];
\draw[fill] (5,0) circle [radius = 0.1];
\draw[fill] (6,-.5) circle [radius = 0.1];
\draw[fill] (8,.5) circle [radius = 0.1];
\draw[fill] (7,0) circle [radius = 0.1];

\end{tikzpicture}

\caption{The periodic orbit.  Binary collisions occur at regular time intervals between the pairs of bodies on the $x$, $y$, and $z$ axes.}
\label{PeriodicOrbitFigure}
\end{figure}

An analogous two-degree-of-freedom orbit of four bodies in the $xy$ plane, with bodies located at $(\pm x,0)$ and $\pm (0,\pm y)$ featuring alternating collisions at the origin was shown to exist in \cite{bibYan1}, in which linear stability of the orbit in the equal-mass case was also shown.  Linear stability of the orbit in the pairwise-equal mass case was shown in \cite{bibBS1}, in which the stability is shown to depend on the ratio of the two mass pairs.  Additionally, in \cite{bibBS1}, the orbit is shown to be unstable if the bodies are allowed to drift off of the coordinate axes. \\

\subsection{The Hamiltonian Setting}
\label{subHamiltonian}
The potential energy is the sum of 15 terms.  Twelve of these terms arise correspond to the edges of the octohedron, and have the form
\begin{equation}
\frac{1}{\sqrt{q_i^2 + q_j^2}}.
\end{equation}
For each of the three possible pairs of subscripts, there are four such terms.  The interaction of each body with its ``companion'' on the same axis gives the final three:
\begin{equation}
\frac{1}{\sqrt{(2q_i)^2}} = \frac{1}{2q_i}.
\end{equation}
So the potential energy of the system is
\begin{equation}
U = \frac{4}{\sqrt{q_1^2 + q_2^2}} + \frac{4}{\sqrt{q_1^2 + q_3^2}} + \frac{4}{\sqrt{q_2^2 + q_3^2}} + \frac{1}{2q_1} + \frac{1}{2q_2} + \frac{1}{2q_3}.
\end{equation}

Let $p_i = \dot{q}_i$ denote the components of the momentum of the bodies.  The kinetic energy for the system is 
\begin{equation}
K = 2\left(\frac{p_1^2}{2}\right) + 2\left(\frac{p_2^2}{2}\right) + 2\left(\frac{p_3^2}{2}\right) = p_1^2 + p_2^2 + p_3^2.
\end{equation}
The Hamiltonian for the system is then given by $H = K - U$. \\

%It is worth noting that as long as $q_1q_2q_3 \neq 0$, then $\ddot{q}_i < 0$ for $i \in I$. \\

\subsection{Regularization}
\label{subReg}
We regularize the collisions that occur at $q_i = 0$ using an extension of the Levi-Civita method (see \cite{bibLeviCivita}).  Specifically, let
\begin{equation}
F = \sum_{i = 1}^3 \sqrt{q_i}P_i.
\end{equation}
This generates a coordinate transformation given by
\begin{equation}
Q_i = \frac{\partial F}{\partial P_i} = \sqrt{q_i}, \quad p_i = \frac{\partial F}{\partial q_i} = \frac{P_i}{2\sqrt{q_i}},
\end{equation}
or
\begin{equation}
q_i = Q_i^2, \quad p_i = \frac{P_i}{2Q_i}.
\end{equation}

In these coordinates, the potential energy for the system is given by
\begin{equation}
\tilde{U} = \frac{4}{\sqrt{Q_1^4 + Q_2^4}} + \frac{4}{\sqrt{Q_1^4 + Q_3^4}} + \frac{4}{\sqrt{Q_2^4 + Q_3^4}} + \frac{1}{Q_1^2} + \frac{1}{Q_2^2} + \frac{1}{Q_3^2}
\end{equation}

The new kinetic energy is given by
\begin{equation}
\tilde{K} = \frac{P_1^2}{4Q_1^2} + \frac{P_2^2}{4Q_2^2} + \frac{P_3^2}{4Q_3^2}.
\end{equation}
The new Hamiltonian is given by $\tilde{H} = \tilde{K} - \tilde{U}.$ \\

Lastly, to regularize the collisions at $Q_i = 0$, we apply a change of time satisfying
\begin{equation}
\label{tsrelation}
\frac{dt}{ds} = Q_1^2Q_2^2Q_3^2.
\end{equation}
This gives the regularized Hamiltonian $\Gamma = \frac{dt}{ds}(\tilde{H} - E),$ or
\begin{align}
\label{regHam}
\Gamma &= \frac{P_1^2Q_2^2Q_3^2}{4} + \frac{Q_1^2P_2^2Q_3^2}{4} + \frac{Q_1^2Q_2^2P_3^2}{4} \nonumber \\ 
&- \frac{4Q_1^2Q_2^2Q_3^2}{\sqrt{Q_1^4 + Q_2^4}} - \frac{4Q_1^2Q_2^2Q_3^2}{\sqrt{Q_1^4 + Q_3^4}} - \frac{4Q_1^2Q_2^2Q_3^2}{\sqrt{Q_2^4 + Q_3^4}} \\
&- \frac{Q_2^2Q_3^2}{2} - \frac{Q_1^2Q_3^2}{2} - \frac{Q_1^2Q_2^2}{2} - Q_1^2Q_2^2Q_3^2E, \nonumber
\end{align}
where $E$ is the fixed energy of the system. \\

We now show that the system has been regularized as claimed.  Let $i, j, k$ be distinct.  Then, at the collision where $Q_i = 0$, $Q_j \neq 0$, and $Q_k \neq 0$, the condition $\Gamma = 0$ forces
\begin{equation}
\label{regularized}
\frac{P_i^2Q_j^2Q_k^2}{4} - \frac{Q_j^2Q_k^2}{2} = 0.
\end{equation}
Then $P_i = \pm \sqrt{2}$.  Moreover, since 
\begin{equation}
\label{collisionCondition}
\dot{Q}_i = \frac{d\Gamma}{dP_i} = \frac{P_iQ_j^2Q_k^2}{2}
\end{equation}
then $\dot{Q}_i \neq 0$ when $Q_i = 0$.  Hence the orbit can be continued past the collision.  (In the regularized setting, we will use the dot notation to represent the derivative with respect to the new time variable $s$.) \\

An important feature of the regularization that can be determined from Equation \ref{collisionCondition} is that both $\dot{Q}_i$ and $P_i$ have the same sign at the collision time.  Since $P_i$ is continuous and non-zero at collision time, the sign of $P_i$ is the same before and after the collision.  Hence, the sign of $\dot{Q}_i$ also does not change, so $Q_i$ must either pass from a negative to a positive value at collision, or from a positive to a negative one.  This is a characteristic feature of the Levi-Civita regularization, where the original spatial variables are double-covered after the regularization.\\

It is worth noting that the planar orbit discussed in \cite{bibYan1} and \cite{bibBS1} does not exist as an invariant subspace or the octahedral orbit under consideration. \\

\section{The Periodic Orbit}
\label{secPeriodicOrbit}
\subsection{Description}
\label{subDescription}
The desired orbit passes through binary collisions at the origin caused by $Q_1 = 0$, $Q_2 = 0$, and then $Q_3 = 0$ in a cyclic fashion, as pictured in Figure \ref{PeriodicOrbitFigure}.  In a physical sense, we start with the bodies with (non-regularized) positions given by
\begin{align}
\pm q_1 &= 0, \nonumber \\
\pm q_2 &= \omega, \\
\pm q_3 &= \omega, \nonumber
\end{align}
and ending at
\begin{align}
\pm q_1 &= \omega, \nonumber \\
\pm q_2 &= 0, \\ 
\pm q_3 &= \omega, \nonumber
\end{align}
for some positive number $\omega$.  In other words, the bodies start at binary collision of the $x$-axis bodies with the remaining four bodies forming a square in the $yz$ plane.  After some amount of time, the two $y$-axis bodies will collide at the origin with the other four bodies forming a square in the $xz$ plane.  The proposed orbit will then be extended by a symmetry coinciding with a rotation of $120^\circ$ about the line $x = y = z$ in $\mathbb{R}^3$.  That is to say, the orbit continues through a sequence of collisions 
\begin{equation}
(\pm q_1, \pm q_2, \pm q_3):(0, \omega, \omega) \to (\omega, 0, \omega) \to (\omega, \omega, 0) \to (0, \omega, \omega) \to \ldots,
\end{equation}
with the collisions being equally-spaced in time. \\

In the regularized coordinates, the velocity components can also be defined.  Let $\gamma(s) = (Q_1(s), Q_2(s), Q_3(s), P_1(s), P_2(s), P_3(s))^T$.  At each collision time with $Q_i = 0$, the sign of $Q_i$ changes as noted at the end of Section \ref{subReg}.  Additionally, both $\gamma(s)$ and $-\gamma(s)$ correspond to the same setting in the original coordinates.  Hence, in the regularized setting, one period of the orbit passes through six collisions rather than three.

\subsection{Extension by Symmetry}
\label{subExtension}

\begin{lemma}
\label{symmetryconstruct}
Suppose $\gamma(s)$ is a solution to the regularized Hamiltonian system $\Gamma$ that satisfies
\begin{equation}
\label{initialGamma}
\gamma(0) = (0, \alpha, \alpha, \sqrt{2}, -\beta, \beta)^T \\
\end{equation}
and
\begin{equation}
\label{finalGamma}
\gamma(2\tau) = (\alpha, 0, \alpha, \beta, -\sqrt{2}, -\beta)^T.
\end{equation}
for some $\tau > 0$ and $E < 0$.  Then $\gamma(s)$ extends to a $12\tau$ periodic orbit for the system $\Gamma$.
\end{lemma}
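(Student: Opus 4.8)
The plan is to exhibit the single linear map that carries the initial condition \eqref{initialGamma} onto the terminal condition \eqref{finalGamma}, to recognize it as a symmetry of $\Gamma$ that commutes with the flow, and then to generate the whole orbit by iterating that map. Concretely, I would set
\[
g(Q_1,Q_2,Q_3,P_1,P_2,P_3) = (Q_3,\,-Q_1,\,Q_2,\,P_3,\,-P_1,\,P_2),
\]
the composition of the cyclic shift $(Q_1,Q_2,Q_3)\mapsto(Q_3,Q_1,Q_2)$ (the regularized counterpart of the $120^\circ$ rotation about $x=y=z$) with the sign change $(Q_2,P_2)\mapsto(-Q_2,-P_2)$. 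Substituting \eqref{initialGamma} directly gives $g\,\gamma(0)=(\alpha,0,\alpha,\beta,-\sqrt2,-\beta)=\gamma(2\tau)$; the sign change on the second coordinate is precisely what turns the outgoing value $P_1=\sqrt2$ at the axis-$1$ collision into the value $P_2=-\sqrt2$ demanded at the axis-$2$ collision, and it is physically invisible since $q_i=Q_i^2$ and $p_i=P_i/(2Q_i)$.

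Next I would check that $g$ is a symmetry of the system. Reading off \eqref{regHam}, each block of $\Gamma$ — the kinetic terms, the three edge potentials, the three companion potentials, and the energy term — is a symmetric function of the pairs $(Q_i,P_i)$ and depends on each variable only through even powers; hence $\Gamma\circ g=\Gamma$ for every fixed $E$. Because $g$ is a signed permutation acting by the same matrix on the $Q$- and $P$-blocks, that matrix is orthogonal and $g$ is therefore symplectic. A linear symplectomorphism preserving the Hamiltonian pushes the Hamiltonian vector field to itself, so $g$ commutes with the time-$s$ flow $\phi_s$ of $\Gamma$, that is, $g\circ\phi_s=\phi_s\circ g$.

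With these facts in hand the extension is automatic. Since the collisions were regularized in Section \ref{subReg}, the vector field of $\Gamma$ is smooth along the orbit through the collision points, so $\gamma$ extends to a maximal solution with $\gamma(s)=\phi_s(\gamma(0))$. Using $\gamma(2\tau)=g\,\gamma(0)$ and commutation,
\[
\gamma(s+2\tau)=\phi_s(\gamma(2\tau))=\phi_s\bigl(g\,\gamma(0)\bigr)=g\,\phi_s(\gamma(0))=g\,\gamma(s)
\]
for all $s$, and iterating yields $\gamma(s+2k\tau)=g^k\gamma(s)$. A one-line matrix computation shows that the underlying $3$-cycle carries the sign product $-1$, so $g^3=-\mathrm{Id}$ — in agreement with the observation in Section \ref{subDescription} that $\gamma$ and $-\gamma$ describe the same physical state — whence $g^6=\mathrm{Id}$ and $\gamma(s+12\tau)=\gamma(s)$.

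I do not expect a single hard step so much as two places demanding care. The first is the sign bookkeeping in verifying $g\,\gamma(0)=\gamma(2\tau)$: the combination of the cyclic relabeling with the lone sign flip on the second coordinate must reproduce \eqref{finalGamma} exactly, including the placement of the minus signs on $P_1$ and $P_3$. The second is making sure the iterated images really patch together into a genuine solution of the regularized system across all six collisions, rather than a formal concatenation; this is exactly what the regularization of Section \ref{subReg} guarantees, and invoking it — rather than re-deriving smoothness at collision — keeps the argument clean.
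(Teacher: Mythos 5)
Your proof is correct and follows essentially the same route as the paper: both identify the shift $\gamma(s+2\tau)=g\,\gamma(s)$ with $g$ equal to the matrix the paper later calls $S_f$, and both close the orbit via $g^3=-\mathrm{Id}$, $g^6=\mathrm{Id}$. The only difference is packaging — you justify flow-equivariance by the general fact that a linear symplectic map preserving $\Gamma$ commutes with the flow, whereas the paper verifies the transformed equations of motion componentwise (sign changes and permutations) and invokes uniqueness of solutions before listing $\gamma(4\tau),\dots,\gamma(12\tau)$ explicitly.
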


\begin{proof}
We first establish the symmetries that will allow us to extend the orbit as claimed.  By direct calculation, we find that the equation for $\dot{Q}_i$ is negated under the transformation $P_i \mapsto -P_i$ and remains fixed under any sign change of the remaining variables.  We also find that $\dot{P}_i$ is negated under $Q_i \mapsto -Q_i$ and remains fixed under any other sign change of the remaining variables.  Furthermore, for any permutation $\sigma \in S_3$, since $\Gamma$ is fixed under permutation of the subscripts by $\sigma$, then the equation of motion for $\dot{Q}_i$ in terms of $Q_1, Q_2, Q_3, P_1, P_2, P_3$ is the same as that of $\dot{Q}_{\sigma(i)}$ in terms of $Q_{\sigma(1)}, Q_{\sigma(2)}, Q_{\sigma(3)}, P_{\sigma(1)}, P_{\sigma(2)}, P_{\sigma(3)}$.  Similar permutation results hold for $P_i$. \\

Consider the orbit with initial conditions $\gamma(2\tau)$.  By the symmetries just discussed, we have that
\begin{align}
\dot{Q}_1(2\tau) &= \dot{Q}_3(0)\textcolor{red}{,} \nonumber \\
\dot{Q}_2(2\tau) &= -\dot{Q}_1(0), \nonumber \\
\dot{Q}_3(2\tau) &= \dot{Q}_2(0), \\
\dot{P}_1(2\tau) &= \dot{P}_3(0), \nonumber \\
\dot{P}_2(2\tau) &= -\dot{P}_1(0), \nonumber \\
\dot{P}_3(2\tau) &= \dot{P}_2(0)\textcolor{red}{.} \nonumber
\end{align}
Moreover, the equations of motion are the same as those on the interval $s \in [0, 2\tau]$ under permutations and sign changes as discussed above.  Existence and uniqueness of solutions to differential equations gives
\begin{align}
Q_1(s + 2\tau) &= Q_3(s), \nonumber \\
Q_2(s + 2\tau) &= -Q_1(s), \nonumber \\
Q_3(s + 2\tau) &= Q_2(s), \\
P_1(s + 2\tau) &= P_3(s), \nonumber \\
P_2(s + 2\tau) &= -P_1(s), \nonumber \\
P_3(s + 2\tau) &= P_2(s), \nonumber
\end{align}
is a solution to the Hamiltonian system given by $\Gamma$.  Setting $s = 2\tau$, we have that
\begin{equation}
\gamma(4\tau) = (\alpha, -\alpha, 0, -\beta, -\beta, -\sqrt{2})^T.
\end{equation}
Repeating the argument with initial conditions given by $\gamma(4\tau)$ gives
\begin{equation}
\gamma(6\tau) = (0, -\alpha, -\alpha, -\sqrt{2}, \beta, -\beta)^T.
\end{equation}
Continuing in turn, we have that
\begin{align}
\gamma(8\tau) &= (-\alpha, 0, -\alpha, -\beta, \sqrt{2}, \beta)^T, \\
\gamma(10\tau) &=(-\alpha, \alpha, 0, \beta, \beta, \sqrt{2})^T, \\
\gamma(12\tau) &=(0, \alpha, \alpha, \sqrt{2}, -\beta, \beta)^T. 
\end{align}
Since $\gamma(0) = \gamma(12\tau)$, the periodic orbit has been constructed as claimed.
\end{proof}

Physically, the portion of the orbit assumed to exist in Lemma \ref{symmetryconstruct} corresponds to an orbit in which all bodies start in the $x = 0$ plane with a binary collision at the origin and the remaining bodies forming a square along the $y$ and $z$ coordinate axes.  At that time, the bodies along the $y$ axis move towards the origin, while the bodies on the $x$ and $z$ axes move away from it.  As the $y$-axis bodies move towards collision, the $z$-axis bodies move outward, eventually slowing down, stopping, and turning around.  At the time of the collision of the $y$-axis bodies, the bodies on the $x$ and $z$ axes again form a square, with the $z$ axis bodies lying in their same initial position with their velocity now reversed.  \\

\textbf{Note:} We do not rule out the possibility of the existence of a ``less symmetric'' orbit.  Indeed, the arguments in Lemma \ref{symmetryconstruct} give the same conclusion if we assume that
\begin{align}
\gamma(0) &= (0, a, b, \sqrt{2}, -c, d)^T,\\
\gamma(2\tau) &= (b, 0, a, d, -\sqrt{2}, -c)^T,
\end{align}
without the requirement that $a = b$ and $c = d$.  However, for simplicity we restrict ourselves to the ``reduced'' case at the present time.\\

\begin{lemma}
\label{symmetryconstruct2}
Suppose $\gamma(s)$ is a solution to the regularized Hamiltonian system $\Gamma$ with $\gamma(0)$ defined as in Equation \ref{initialGamma} and that satisfies
\begin{equation}
\label{initialGamma2}
\gamma(\tau) = (a, a, b, c, -c, 0)^T.
\end{equation}
for some $\tau > 0$ and $E < 0$.  Then $\gamma(s)$ extends to a $12\tau$ periodic orbit for the system $\Gamma$.
\end{lemma}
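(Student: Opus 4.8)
The plan is to mirror the structure of Lemma \ref{symmetryconstruct}, but to exploit a \emph{time-reversing} symmetry of $\Gamma$ rather than the forward permutation symmetries alone. The guiding observation is that the hypothesis $\gamma(\tau) = (a, a, b, c, -c, 0)^T$ is precisely the statement that $\gamma(\tau)$ is a fixed point of a reversing symmetry built from the transposition of the first two indices together with a reversal of all momenta. Once this is in place, the orbit on $[\tau, 2\tau]$ is a reflected copy of the orbit on $[0, \tau]$, which forces $\gamma(2\tau)$ into exactly the form required to invoke Lemma \ref{symmetryconstruct}.

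First I would record that, because $\Gamma$ depends on the momenta only through $P_1^2, P_2^2, P_3^2$, the map $\rho \colon (Q_1, Q_2, Q_3, P_1, P_2, P_3) \mapsto (Q_2, Q_1, Q_3, -P_2, -P_1, -P_3)$ is a reversing symmetry of the flow: if $\gamma(s)$ solves the system, then so does $\hat{\gamma}(s) = \rho(\gamma(2\tau - s))$. This follows from the sign-change and permutation rules already established in the proof of Lemma \ref{symmetryconstruct}. The reflection $P_i \mapsto -P_i$ negates each $\dot{Q}_i$ while leaving each $\dot{P}_i$ fixed, the time reversal $s \mapsto 2\tau - s$ supplies the compensating sign so that Hamilton's equations are preserved, and the transposition $(1\,2)$ is a symmetry because $\Gamma$ is invariant under permutation of subscripts. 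Writing out the six component equations and matching them to the equations of motion is a routine calculation that I would simply state as verified.

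Next I would check that $\gamma(\tau)$ is fixed by $\rho$: applying $\rho$ to $(a, a, b, c, -c, 0)^T$ swaps the two equal $Q$-coordinates, sends $P_1 = c$ and $P_2 = -c$ to $-P_2 = c$ and $-P_1 = -c$, and fixes $P_3 = 0$, returning the same vector. Hence $\hat{\gamma}(\tau) = \rho(\gamma(\tau)) = \gamma(\tau)$, so $\hat{\gamma}$ and $\gamma$ agree at $s = \tau$, and existence and uniqueness give $\hat{\gamma} \equiv \gamma$; that is, $\gamma(s) = \rho(\gamma(2\tau - s))$ for all $s$. Evaluating at $s = 0$ and using that $\rho$ is an involution yields $\gamma(2\tau) = \rho(\gamma(0)) = \rho(0, \alpha, \alpha, \sqrt{2}, -\beta, \beta)^T = (\alpha, 0, \alpha, \beta, -\sqrt{2}, -\beta)^T$, which is exactly Equation \ref{finalGamma}. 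Lemma \ref{symmetryconstruct} then extends $\gamma$ to the desired $12\tau$-periodic orbit.

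I expect the only genuine subtlety to be the bookkeeping of the reversing symmetry $\rho$, namely pairing the transposition $(1\,2)$ with the momentum reversal and the time reflection so that the composite both preserves the equations of motion and fixes $\gamma(\tau)$. Everything downstream is a mechanical verification followed by an appeal to Lemma \ref{symmetryconstruct}, so once $\rho$ is correctly identified the argument is essentially forced.
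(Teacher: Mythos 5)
Your argument is correct and is essentially the paper's own proof: the paper also negates the momenta to reverse time and then applies the $(1\,2)$ index swap to conclude $\gamma(2\tau) = (\alpha, 0, \alpha, \beta, -\sqrt{2}, -\beta)^T$ before invoking Lemma \ref{symmetryconstruct}. You have merely packaged those two steps into the single reversing symmetry $\rho$ and used the fact that $\gamma(\tau)$ is a fixed point of $\rho$, which is a clean and equivalent formulation.
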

\begin{proof}
Suppose $\gamma(s)$ exists as claimed.  Consider an orbit with initial conditions
\begin{equation}
\label{reversedIconds}
\gamma(\tau) = (a, a, b, -c, c, 0)^T.
\end{equation}
Note that the values of all $P_i(\tau)$ have been negated.  By the chain rule, negating the momentum terms is equivalent to reversing time.  Then for the initial conditions in Equation \ref{reversedIconds}, existence and uniqueness must give us that
\begin{equation}
\gamma(2\tau) = (0, \alpha, \alpha, -\sqrt{2}, \beta, -\beta)^T. \\
\end{equation}
Then using a similar symmetry argument as in Lemma \ref{symmetryconstruct}, it must be that if
\begin{equation}
\gamma(\tau) = (a, a, b, c, -c, 0)^T,
\end{equation}
then
\begin{equation}
\gamma(2\tau) = (\alpha, 0, \alpha, \beta, -\sqrt{2}, -\beta)^T.
\end{equation}
Applying Lemma \ref{symmetryconstruct} then gives the final result.
\end{proof}

Hence, in the regularized setting, the orbit can be constructed if we can find initial conditions that correspond to just the first twelfth of the orbit.  This 12-fold symmetry is similar to that of the figure-eight orbit of Moore, Chenciner, and Montgomery (see \cite{bibMoore} and \cite{bibChencinerMontgomery}).  \\

Note that at the time $\gamma(\tau)$, the vectors $\langle a, a, b \rangle$ and $\langle c, -c, 0 \rangle$ (corresponding to position and velocity, respectively) are orthogonal.  This type of extension by time-reversing symmetry and orthogonality has been studied in other works as well (examples include \cite{bibMartinez} and \cite{bibShib}). \\

\subsection{Existence of the Orbit}
\label{subExistence}

The results in this section serve to justify the numerical calculations that will ultimately be used to find the periodic orbit.  For some of the proofs in this section, it will be more convenient to use the original $q_i$ and $p_i$ coordinates. \\

\begin{lemma}
\label{CollisionGuaranteed}
Suppose at some time $s_0$ we have that $Q_iP_i < 0$.  Then if the orbit does not pass through an unregularized collision, there is some future time $s^* > s_0$ where $Q_i(s^*) = 0$.
\end{lemma}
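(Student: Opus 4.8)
The plan is to pass to the original coordinates $q_i, p_i$, as the end of the section suggests, where the statement becomes transparent: the hypothesis $Q_iP_i<0$ says precisely that the $i$-th pair of bodies is moving toward the origin, and I will show that once this happens the bodies cannot turn around, so a collision is forced. First I would translate the hypothesis. Since $q_i=Q_i^2$ and $p_i=P_i/(2Q_i)$, we have $Q_iP_i=2Q_i^2p_i=2q_ip_i$, so $Q_iP_i<0$ is equivalent to $q_ip_i<0$; as $Q_i\neq0$ forces $q_i>0$, this means $p_i<0$, i.e.\ the coordinate $q_i$ is decreasing at the physical time $t_0$ corresponding to $s_0$.

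Second comes the key monotonicity. Because the kinetic energy is independent of the $q_i$, Hamilton's equations give $\dot p_i=-\partial H/\partial q_i=\partial U/\partial q_i$, and a direct computation gives, for $q_i>0$ and distinct $i,j,k$,
\[
\frac{\partial U}{\partial q_i}=-\frac{1}{2q_i^2}-\frac{4q_i}{(q_i^2+q_j^2)^{3/2}}-\frac{4q_i}{(q_i^2+q_k^2)^{3/2}}<0,
\]
each term being strictly negative. Hence $p_i$ is strictly decreasing as long as $q_i>0$; in particular it stays below its initial negative value $p_i(t_0)<0$ and can never return to $0$, so the bodies cannot turn around. Since $\dot q_i$ is a fixed positive multiple of $p_i$, it is bounded above by a negative constant, and integrating shows that $q_i$ must reach $0$ after a finite amount of physical time.

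Third, I would return to the regularized time variable. The monotonicity above is robust to what the other axes do: even if $q_j$ or $q_k$ momentarily vanishes, the corresponding term in $\partial U/\partial q_i$ stays negative, so $p_i$ keeps decreasing through any binary collision on another axis. The hypothesis that the orbit meets no unregularized collision guarantees both that the orbit is defined up to the moment $q_i=0$ and that this moment is a genuine binary collision (only $q_i$ vanishing, with $q_jq_k\neq0$). By the Levi-Civita regularization of Section \ref{subReg}, such a collision is a regular point of the $s$-flow at which $\dot Q_i\neq0$, so it is attained at a finite value $s^*>s_0$ with $Q_i(s^*)=0$.

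I expect the main obstacle to be the passage between the two time variables rather than the dynamics. The monotonicity argument most naturally produces a collision in finite \emph{physical} time, whereas the statement asks for a finite value of the regularized time $s$; bridging this is exactly where the regularization is needed, since $dt/ds=Q_1^2Q_2^2Q_3^2\to0$ at collision and one must be sure the collision is not pushed off to $s=\infty$. Working in the original coordinates is what keeps $\partial U/\partial q_i$ clean: the regularized field $\dot P_i$ carries an overall factor of $Q_i$ and admits no comparably simple sign, so a direct argument in the $(Q_i,P_i)$ variables would be considerably messier.
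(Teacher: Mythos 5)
Your proposal is correct and follows essentially the same route as the paper's own proof: pass to the original coordinates, observe that $\dot p_i<0$ while $q_i>0$ so that $q_i(t)\leq q_i(t_0)+p_i(t_0)(t-t_0)$ forces a collision in finite physical time, then convert back to the regularized time via Equation \ref{tsrelation}. Your added details (the explicit sign computation of $\partial U/\partial q_i$ and the remark that the Levi-Civita regularization keeps the collision at a finite value of $s$) merely flesh out steps the paper states more tersely.
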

\begin{proof}
This is most easily proven in the original $q$ and $p$ coordinates.  The initial conditions correspond to some $q_i > 0$ and $p_i < 0$.  Since $\dot{p}_i = \ddot{q}_i < 0$, then $p_i < 0$ as long as $q_i > 0$.  Let $t_0$ be the time corresponding to $s_0$.  For an interval of time containing $t_0$, we then have that
\begin{equation}
q_i(t) \leq p_i(t_0)(t - t_0) + q_i(t_0).
\end{equation}
Then the collision must occur before the time $t^*$ defined by
\begin{equation}
t^* = t_0 - \frac{q_i(t_0)}{p_i(t_0)}.
\end{equation}
By assumption, since no unregularized collision occurs along the orbit, there must be some value of $s^*$ corresponding to $t^*$, where the relation between $t^*$ and $s^*$ is given by Equation \ref{tsrelation}. \\
\end{proof}

To help facilitate future discussion, define $\Sigma$ to be the set of ordered pairs $(\alpha, \beta)$ satisfying:
\begin{itemize}
\item $\alpha \geq 0$
\item $\beta \geq 0$
\item If $\gamma(0)$ is defined as in Equation \ref{initialGamma} for $(\alpha, \beta) \in \Sigma$, then the minimum $s^* > 0$ for which $Q_1Q_2Q_3(s^*) = 0$ satisfies $Q_2(s^*) = 0$.
\end{itemize}

In Section \ref{subNumerICond}, our results will show some of the properties of the set $\Sigma$.  Non-numerically, we can say that by Lemma \ref{CollisionGuaranteed}, using the initial conditions as given in Equation \ref{initialGamma} with $(\alpha, \beta) \in \Sigma$, there must be \textit{some} future time where the $y$-axis bodies collide.  We wish to focus on the set of initial conditions where this collision is ``second'' (in the sense that at $s = 0$, the $x$-axis bodies are at the ``first'' collision). \\

It is natural to believe that for initial conditions given from Equation \ref{initialGamma}, if $\alpha \geq 0$ and $\beta \geq 0$, then the $z$-axis bodies cannot be the next collision.  Numerical integration of the equations of motion shows that this is indeed the case.  Some analytical results towards this end will be shown in Section \ref{subNumerICond}.  \\

We recall again that at collision time, Equation \ref{regularized} holds.  Hence in the equations of motion, when $\alpha = 0$, there is no restriction on $\beta$.  A higher value of $\beta$ corresponds to a greater velocity of the non-colliding masses.  Therefore, given fixed positions, in order to preserve the constant energy of the system, the ``ejection velocity'' of the colliding bodies must decrease as $\beta$ increases.  (Although the value of $P_1(0) = \pm \sqrt{2}$ regardless, the value of $|P_1(s)|$ must be near zero shortly after the collision.)  Hence for a fixed value of $\alpha$ and a sufficiently high value of $\beta$, the first collision after $s = 0$ must again be a collision of the $x$-axis bodies.  We can therefore conclude that for any value of $\alpha$, the values of $\beta$ for which $(\alpha, \beta) \in \Sigma$ must be bounded.  Furthermore, at this bounding value of $\beta$, the orbit ends in a quadruple collision $Q_1 = Q_2 = 0$ after infinite regularized time.\\

\begin{lemma}
\label{alphastarlemma}
For a fixed $E < 0$, there exists some $\alpha^* > 0$ so that if $\alpha > \alpha^*$, then $(\alpha, 0) \not \in \Sigma$.
\end{lemma}
\begin{proof}
As $\alpha \to \infty$, with $\gamma(0)$ as defined in Equation \ref{initialGamma}, the configuration approaches a single binary pair of bodies colliding in periodic fashion on the $x$ axis and four decoupled bodies on the $y$ and $z$ axes.  Certainly for $\alpha$ sufficiently large it must be the case that the first collision in positive time occurs at $Q_1 = 0$, as the gravitational force of the remaining four bodies becomes negligible.  For such $\alpha$, it is thus the case that $(\alpha, 0) \not \in \Sigma$.  Hence the set of all $\alpha$ for which $(\alpha, 0) \in \Sigma$ is bounded above.  We can then define $\alpha^*$ to be the supremum of this set.
\end{proof}

\begin{lemma}
\label{Existence1}
Let $\gamma(0)$ be defined as in Equation \ref{initialGamma} with $(\alpha, \beta) \in \Sigma$ and $E < 0$.  Then there is some $\tau > 0$ with $\tau = \tau(\alpha, \beta)$ so that $Q_1(\tau) = Q_2(\tau)$.
\end{lemma}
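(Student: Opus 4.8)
The plan is to reduce the statement to a single application of the intermediate value theorem applied to the difference $g(s) := Q_1(s) - Q_2(s)$. First I would record what membership $(\alpha,\beta)\in\Sigma$ buys us: by the defining condition of $\Sigma$ there is a \emph{first} time $s^\ast>0$ with $Q_1Q_2Q_3(s^\ast)=0$, and at that time $Q_2(s^\ast)=0$. The existence of this collision is guaranteed by Lemma~\ref{CollisionGuaranteed} applied to the inward-moving $y$-axis pair, and membership in $\Sigma$ tells us it is the \emph{first} collision, so in particular none of $Q_1,Q_2,Q_3$ vanishes on the open interval $(0,s^\ast)$. I may assume $\alpha>0$, since $\alpha=0$ forces $Q_1(0)=Q_2(0)=0$, a total collision that is not an admissible starting configuration.

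Next I would pin down the sign of $g$ at the two endpoints of $[0,s^\ast]$. At $s=0$ one reads off $g(0)=Q_1(0)-Q_2(0)=0-\alpha=-\alpha<0$ directly from Equation~\ref{initialGamma}. For the other endpoint I would show that $Q_1$ stays positive on $(0,s^\ast)$: Equation~\ref{collisionCondition} gives $\dot Q_1(0)=\tfrac12 P_1(0)Q_2(0)^2Q_3(0)^2=\tfrac{\sqrt2}{2}\alpha^4>0$, so $Q_1$ increases through $0$ and is positive just after $s=0$; since $Q_1$ is continuous and does not vanish on $(0,s^\ast)$, it must remain strictly positive there. Passing to the limit as $s\to s^{\ast-}$ and using $Q_2(s^\ast)=0$ then yields $g(s^\ast)=Q_1(s^\ast)\ge 0$.

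Finally, $g$ is continuous on $[0,s^\ast]$ with $g(0)<0\le g(s^\ast)$, so the intermediate value theorem produces some $\tau\in(0,s^\ast]$ with $g(\tau)=0$, that is, $Q_1(\tau)=Q_2(\tau)$; the strict inequality $g(0)<0$ rules out $\tau=0$, so $\tau>0$ as required, and taking the least such $\tau$ defines $\tau=\tau(\alpha,\beta)$.

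I do not expect a serious obstacle here; the only point needing genuine care is the claim that $Q_1$ remains strictly positive on the whole interval $(0,s^\ast)$ rather than merely near $s=0$. This is exactly where the minimality built into the definition of $\Sigma$ is essential: without knowing that no $Q_i$ vanishes before $s^\ast$, one could not exclude an intermediate zero of $Q_1$ (a premature $x$-axis collision) that would invalidate the endpoint computation for $g(s^\ast)$. An alternative avoiding $\Sigma$ would be to track the $x$-pair's separation in the original coordinates as in Lemma~\ref{CollisionGuaranteed}, but the $\Sigma$-based argument is cleaner.
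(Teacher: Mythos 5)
Your proposal is correct and follows essentially the same route as the paper: invoke Lemma \ref{CollisionGuaranteed} to obtain the first collision time $s^*$ with $Q_2(s^*)=0$, use membership in $\Sigma$ to conclude $Q_1(s^*)\geq 0$, and apply the Intermediate Value Theorem to the difference of $Q_1$ and $Q_2$ on $[0,s^*]$. Your additional care in verifying that $Q_1$ stays positive via $\dot Q_1(0)>0$ and the minimality of $s^*$ is a slightly more explicit justification of the same step the paper treats implicitly.
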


\begin{proof}
For some $s>0$ sufficiently small, we have that the signs of the components of $\gamma(s)$ are given by
\begin{equation}
\gamma(s) = (+, +, +, +, -, +).
\end{equation}
Then by Lemma \ref{CollisionGuaranteed}, we have that at some future time, $Q_2(s^*) = 0$.  Since $(\alpha, \beta) \in \Sigma$, it must be that $Q_1(s^*) \geq 0$.  Since $Q_2(0) - Q_1(0) = \alpha > 0$ and $Q_2(s^*) - Q_1(s^*) \leq 0$ the result follows by the Intermediate Value Theorem.
\end{proof}

Physically, the conditions at time $\tau$ correspond to the bodies on the $x$- and $y$-axes lying in a square in the $xy$ plane with the third pair of bodies symmetrically placed on the $z$ axis about the origin.  We need next to simultaneously control the velocity of the third pair of bodies on the $z$ axis and the velocities of the bodies on the $x$ and $y$ axes to establish the conditions set forth in Lemma \ref{symmetryconstruct2}.   \\

\begin{lemma}
\label{Existence2}
Let $(\alpha, \beta_0) \in \Sigma$ with $\alpha < \alpha^*$ and $P_3(\tau) > 0$, with $\tau$ as defined in Lemma \ref{Existence1}.  Then there is some $\beta > 0$ so that $(\alpha, \beta) \in \Sigma$ and
\begin{equation}
\label{existence2b}
\gamma(\tau) = (a, a, b, c_1, c_2, 0)^T.
\end{equation}
\end{lemma}
\begin{proof}
Consider $\gamma(0) = (0,\alpha, \alpha, \sqrt{2}, 0, 0)$.  Note that this is Equation \ref{initialGamma} with $\beta = 0$.  Then by the symmetry of $\Gamma$, $Q_2(s)=Q_3(s)$ and $P_2(s) = P_3(s)$.  For $s > 0$, $P_2(s)$ and $P_3(s)$ are both negative, so by Lemma \ref{CollisionGuaranteed} some future time $s^*$ it must be that $Q_2(s^*) = Q_3(s^*) < \epsilon$ for any $\epsilon > 0$.  (Note that this tends toward an unregularized quadruple collision $Q_2 = Q_3 = 0$ after infinite regularized time.)  Since $(\alpha, 0) \in \Sigma$ by supposition, we must have that $Q_1 > 0$ for all $0 < s < s^*$.  So at the time $Q_1(\tau) = Q_2(\tau)$, it must be that $P_3(\tau) < 0$.  On the other hand, by supposition, setting initial conditions corresponding to $(\alpha, \beta_0)$, then $Q_3(\tau)$ must also be positive.  The result then follows from the Intermediate Value Theorem. \\
\end{proof}

It may be the case that for some values of $\alpha$, the orbit corresponding to the value of $\beta$ on the boundary of $\Sigma$ has $P_3(\tau) < 0$.  These values of $\alpha$ cannot correspond to the periodic orbit.  We will see that this is sometimes the case in Section \ref{subNumerICond}. \\

We expect that away from the potentially chaotic region near $||\gamma(0)|| = 0$, there should be a single value of $\beta$ for which \ref{existence2b} holds.  Further, by continuity of the equations of motion, we expect that $\beta$ will again be a continuous function of $\alpha$.\\

\begin{lemma}
\label{Existence3}
Set $\gamma(0)$ as in Equation \ref{initialGamma} for some $E < 0$.  Then there is some $\alpha > 0$ and $\beta > 0$ so that
\begin{equation}
\label{existence3b}
\gamma(\tau) = (a, a, b, c, -c, 0)^T,
\end{equation}
\end{lemma}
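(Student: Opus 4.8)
The plan is to reduce Lemma~\ref{Existence3} to a one-dimensional shooting argument in the parameter $\alpha$, built directly on the two preceding lemmas. For initial data $\gamma(0)$ as in Equation~\ref{initialGamma} with $(\alpha,\beta)\in\Sigma$, Lemma~\ref{Existence1} already supplies a time $\tau=\tau(\alpha,\beta)$ at which $Q_1(\tau)=Q_2(\tau)$, so the first two coordinates of $\gamma(\tau)$ agree. For each fixed $\alpha<\alpha^*$ that admits some $\beta_0$ with $P_3(\tau)>0$, Lemma~\ref{Existence2} then produces a value $\beta=\beta(\alpha)$ with $(\alpha,\beta)\in\Sigma$ and $P_3(\tau)=0$, so that $\gamma(\tau)=(a,a,b,c_1,c_2,0)^T$. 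Hence the only condition separating this from the target form in Equation~\ref{existence3b} is the antisymmetry $c_1=-c_2$, i.e. $P_1(\tau)+P_2(\tau)=0$. First I would record that, by continuous dependence of solutions on initial conditions together with the continuity of the map $\alpha\mapsto\beta(\alpha)$ furnished by Lemma~\ref{Existence2}, the quantity
\[
f(\alpha):=P_1(\tau)+P_2(\tau)
\]
is continuous in $\alpha$ along this curve.

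The heart of the argument is then to exhibit two values of $\alpha$ at which $f$ has opposite signs and invoke the Intermediate Value Theorem. I would extract these signs from the competing limiting regimes of the curve $\beta(\alpha)$. At one end of the admissible $\alpha$-range the $x$-axis bodies eject from their collision with $P_1$ near $+\sqrt{2}$ while the $y$-axis bodies fall inward only slowly, so at the crossing time $\tau$ one expects the positive $P_1(\tau)$ to dominate the small negative $P_2(\tau)$, giving $f>0$. At the opposite end, where $\beta(\alpha)$ approaches its bounding value on the boundary of $\Sigma$, the orbit tends toward the simultaneous double collision $Q_1=Q_2\to 0$ noted in the paragraph preceding Lemma~\ref{alphastarlemma}; along that approach both $Q_1$ and $Q_2$ are small and decreasing, so $P_1(\tau)$ and $P_2(\tau)$ are both negative and $f<0$. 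Continuity of $f$ and the Intermediate Value Theorem then yield an $\alpha$ with $f(\alpha)=0$, and the accompanying $\beta=\beta(\alpha)$ gives $\gamma(\tau)=(a,a,b,c,-c,0)^T$, as required.

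The main obstacle is making this sign change rigorous. Unlike the clean comparison available in the proof of Lemma~\ref{Existence2}, where the $\beta=0$ invariant subspace $Q_2=Q_3$ forces $P_3(\tau)<0$ outright, here there is no exact invariant subspace producing a definite sign of $f$, so one must control the orbit uniformly enough in the two limiting regimes to pin down the signs. This is delicate precisely because the curve $\beta(\alpha)$ may meet the boundary of $\Sigma$ while $P_3(\tau)<0$ (so the admissible range of $\alpha$ is itself constrained), and because of the chaotic behavior near $\|\gamma(0)\|=0$ flagged earlier; even establishing the continuity and existence of $\tau$ and of $\beta(\alpha)$ across the whole range is nontrivial. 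I therefore expect the qualitative sign change to be justified analytically only in these asymptotic regimes, with the actual location of the zero—and the verification that the resulting orbit avoids the chaotic region—completed by the numerical computations of the following section.
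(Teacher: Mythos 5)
Your proposal follows essentially the same route as the paper: fix the curve $\beta = \beta(\alpha)$ from Lemma \ref{Existence2}, track $P_1(\tau)+P_2(\tau)$ along it, exhibit a sign change, and conclude by the Intermediate Value Theorem, with the sign determination ultimately delegated to numerics (the paper verifies $P_1(\tau)+P_2(\tau) = 3.475$ at $\alpha = 0.5$ and $-4.112$ at $\alpha = 3.3$ in Section \ref{subNumerICond}). Your added asymptotic heuristics for the two signs go slightly beyond what the paper records, but the logical structure and the reliance on numerical verification are the same.
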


This will be proven numerically.  With $\beta = \beta(\alpha)$ as given in Lemma \ref{Existence2}, we show that there is some value of $\alpha$ for which the quantity $P_1(\tau) + P_2(\tau)$ is negative and a second value of $\alpha$ for which $P_1(\tau) + P_2(\tau)$ is positive.  Again, the result follows from the Intermediate Value Theorem. \\

As a consequence of Lemma \ref{Existence3}, we have the following
\begin{theorem}
\label{OrbitConstructed}
There exists a periodic orbit corresponding to the Hamiltonian described in Section \ref{secSetting}.
\end{theorem}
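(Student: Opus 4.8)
The plan is to assemble the preceding lemmas into a single existence chain and then descend from the regularized system back to the original Hamiltonian. The whole construction has been engineered so that producing the full periodic orbit reduces to finding initial data for merely the first twelfth of the motion; the symmetry lemmas supply everything else.

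First I would invoke Lemma \ref{Existence3} to obtain a fixed energy $E < 0$ together with values $\alpha > 0$, $\beta > 0$, and a time $\tau > 0$ such that, with $\gamma(0)$ given by Equation \ref{initialGamma}, the solution satisfies $\gamma(\tau) = (a, a, b, c, -c, 0)^T$. This is exactly the hypothesis of Lemma \ref{symmetryconstruct2}, whose application immediately produces a $12\tau$-periodic solution $\gamma(s)$ of the regularized Hamiltonian system $\Gamma$. Before descending I would confirm that this orbit is non-constant: the sign change of each $Q_i$ at its collision, established at the end of Section \ref{subReg}, shows that $\gamma(s)$ traverses a genuine closed loop through six regularized collisions and is therefore not an equilibrium.

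The remaining substantive step is the descent to the coordinates of Section \ref{secSetting}. Here I would invert the generating transformation via $q_i = Q_i^2$ and $p_i = P_i/(2Q_i)$ and undo the time change through $dt/ds = Q_1^2 Q_2^2 Q_3^2$. Since $\gamma(s)$ is a continuous solution of the smooth system $\Gamma$ on a compact $s$-interval, the physical period
\begin{equation}
T = \int_0^{12\tau} Q_1^2 Q_2^2 Q_3^2 \, ds
\end{equation}
is finite and strictly positive, the integrand being continuous, nonnegative, and vanishing only at the isolated collision instants. Thus the physical configuration, a $12\tau$-periodic function of $s$, is $T$-periodic in $t$. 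I would also record the double-cover identification of Section \ref{subDescription}: since $q_i$ and $p_i$ are invariant under $\gamma \mapsto -\gamma$, and one checks directly that $\gamma(6\tau) = -\gamma(0)$, the physical orbit in fact closes after half the regularized period, passing through three physical collisions rather than six.

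The hard part is conceptual rather than computational. One must be comfortable treating a trajectory that meets genuine collision singularities as a periodic solution in the regularized (generalized) sense, and one must verify that convergence of the time-change integral together with the $\gamma \mapsto -\gamma$ symmetry genuinely delivers a closed physical orbit rather than merely a periodic path in the doubly covered space. Once the finiteness of $T$ is secured and the double cover is accounted for, the theorem follows at once from the chain Lemma \ref{Existence3} $\Rightarrow$ Lemma \ref{symmetryconstruct2} $\Rightarrow$ descent.
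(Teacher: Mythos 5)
Your proposal is correct and follows essentially the same route as the paper, which presents the theorem simply as a consequence of the chain Lemma \ref{Existence3} $\Rightarrow$ Lemma \ref{symmetryconstruct2} $\Rightarrow$ a $12\tau$-periodic solution of $\Gamma$. The extra care you take with the descent to physical time via $dt/ds = Q_1^2Q_2^2Q_3^2$ and with the double-cover identification $\gamma(6\tau) = -\gamma(0)$ is detail the paper leaves implicit, and you handle it correctly.
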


\subsection{Numerical Results}
\label{subNumerICond}

Here we establish numerical results leading up to finding the initial conditions for the orbit.  From here on, we assume that $E = -1$.  We begin with a first result which is helpful in accelerating some of the numerical calculations.  This is most easily set in the original, rather than regularized, coordinates.\\

\begin{lemma}
\label{CollisionOrdering}
Suppose at some $t_0$ time we have that $0 < q_i < 0.4q_j$ and $p_i < p_j$.  Let $t_i > t_0$ and $t_j > t_0$ be the first times where $q_i = 0$ and $q_j = 0$, respectively.  If both $t_i$ and $t_j$ are finite, then $t_i < t_j$.
\end{lemma}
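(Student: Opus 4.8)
The plan is to work in the original $q,p$ coordinates, as the statement suggests, and to sandwich the true motions of $q_i$ and $q_j$ between two explicitly solvable one-dimensional radial free-fall problems. First I would record the acceleration of a single axis body. Writing Newton's equation for the body at $(q_m,0,0)$ (its companion at $-q_m$ together with the four transverse bodies), every term is attractive toward the origin, so $\ddot q_m<0$ while $q_m>0$; in particular each $p_m=\dot q_m$ is strictly decreasing up to its first collision. Quantitatively the companion contributes exactly $-\tfrac{1}{4q_m^2}$, while each transverse term obeys $-\tfrac{2q_m}{(q_m^2+q_\ell^2)^{3/2}}\ge-\tfrac{2}{q_m^2}$ since $(q_m^2+q_\ell^2)^{3/2}\ge q_m^3$. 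Hence
\begin{equation}
-\frac{17}{4q_m^2}\le\ddot q_m\le-\frac{1}{4q_m^2}.
\end{equation}

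Using the upper bound on $\ddot q_i$, I would dominate $q_i$ from above by the solution $r$ of $\ddot r=-\tfrac{1}{4r^2}$ matching $q_i$ at $t_0$: the difference $r-q_i$ vanishes to first order at $t_0$ and has nonnegative second derivative wherever it is nonnegative, so $q_i\le r$ and $t_i\le t_r$. Symmetrically, the lower bound on $\ddot q_j$ dominates $q_j$ from below by the solution $R$ of $\ddot R=-\tfrac{17}{4R^2}$, giving $q_j\ge R$ and $t_j\ge t_R$. Both $r$ and $R$ are the integrable radial Kepler fall, whose collision time is governed by the conserved energy $\tfrac12\dot\rho^2-\tfrac{c}{\rho}$, so the lemma reduces to the single inequality $t_r<t_R$.

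This is where the constant $0.4$ appears. Under $\rho\mapsto\rho_0\rho$, $t\mapsto(\rho_0^{3/2}/\sqrt c)\,t$ the fall time for $\ddot\rho=-c/\rho^2$ becomes $\rho_0^{3/2}c^{-1/2}\tilde T(v_0\sqrt{\rho_0/c})$ with $\tilde T$ a fixed increasing function, so
\begin{equation}
\frac{t_r}{t_R}=\left(\frac{q_i(t_0)}{q_j(t_0)}\right)^{3/2}\sqrt{17}\cdot\frac{\tilde T(u_i)}{\tilde T(u_j)},
\end{equation}
the factor $\sqrt{17}$ coming from the coefficient ratio $(17/4)/(1/4)$ through the $c^{-1/2}$ scaling. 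Because $(q_i/q_j)^{3/2}\sqrt{17}<1$ precisely when $q_i/q_j<17^{-1/3}\approx0.39$, the hypothesis $q_i(t_0)<0.4\,q_j(t_0)$ is exactly what controls the dominant (position) factor; the velocity hypothesis $p_i(t_0)<p_j(t_0)$ is then needed to control the remaining factor $\tilde T(u_i)/\tilde T(u_j)$, where $u_m=p_m(t_0)\sqrt{q_m(t_0)/c_m}$ with $c_i=\tfrac14$ and $c_j=\tfrac{17}{4}$, in the infalling regime.

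The main obstacle is this last step. The fall time $\tilde T$ is only implicit (Kepler's radial equation), and the normalized velocities $u_i,u_j$ are weighted differently by the two coefficients, so the ratio $\tilde T(u_i)/\tilde T(u_j)$ is not automatically at most $1$; one must show $p_i<p_j$ supplies enough margin both to bound this ratio and to bridge the small gap between $17^{-1/3}$ and $0.4$. A sharper difficulty is the case $p_i(t_0)\ge0$, where body $i$ first moves outward: then the self-only comparison solution $r$ may escape to infinity and the bound $t_i\le t_r$ becomes vacuous. In that case I would abandon the decoupling and argue directly on the gap $q_j-q_i$, which starts positive with positive derivative, using that the strong self-attraction of the nearer body $i$ (amplified by $q_i<0.4\,q_j$) outweighs the transverse terms before $q_j$ can reach the origin. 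Signing $\ddot q_j-\ddot q_i$ in the presence of the transverse cross-terms involving the third pair $q_k$ is the genuine technical crux of that coupled estimate.
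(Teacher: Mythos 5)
Your route is genuinely different from the paper's (which defines $\tilde q = q_i - q_j$, $\tilde p = p_i - p_j$ and shows the region $\{\tilde q<0,\ \tilde p<0\}$ is forward-invariant by verifying, largely numerically, that a two-variable function $f(a,b)$ with $a = q_i/q_j$, $b = q_k/q_j$ is negative for $0<a<0.4$), but as designed it cannot be completed. The quantitative obstruction is the one you flag and then leave open: bounding each transverse pair's pull by $2/q_m^2$ is sharp only when the other pair sits at the origin, and it inflates the effective attraction coefficient from $1/4$ to $17/4$. The resulting best-case time ratio $(q_i/q_j)^{3/2}\sqrt{17}$ is below $1$ only for $q_i/q_j < 17^{-1/3}\approx 0.3897$, whereas the hypothesis allows $q_i/q_j$ up to $0.4$, where $(0.4)^{3/2}\sqrt{17}\approx 1.043>1$. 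Taking $p_i$ just below $p_j$ with both near $0$ makes the velocity factor $\tilde T(u_i)/\tilde T(u_j)$ arbitrarily close to $1$, so in that regime your comparison yields $t_r>t_R$ and proves nothing. Repairing this requires $q_k$-dependent bounds on the transverse forces, which destroys exactly the decoupling that makes the comparison solutions integrable; the constant $0.4$ in the lemma is calibrated to the coupled function $f(a,b)$, not to any decoupled Kepler bound.

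Two further points. First, the velocity hypothesis $p_i<p_j$ compares momenta with equal weights, while your normalized velocities $u_m = p_m\sqrt{q_m/c_m}$ carry the weights $\sqrt{4q_i}$ and $\sqrt{4q_j/17}$; since $q_i<q_j$ these weights are not ordered consistently with the conclusion you need, so $\tilde T(u_i)\le\tilde T(u_j)$ does not follow and must be argued separately (you note this but do not do it). Second, the case where the upper comparison solution $r$ has nonnegative energy ($p_i\ge\sqrt{1/(2q_i)}$, say) is not excluded by the hypotheses, and there $t_r=\infty$ makes the bound $t_i\le t_r$ vacuous; your proposed fallback, a direct sign analysis of $\ddot q_j-\ddot q_i$ in the presence of the $q_k$ cross-terms, is not a patch for an edge case but is essentially the entire content of the paper's proof, and it is precisely the step the paper itself only establishes numerically. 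So the proposal identifies the right difficulties but resolves none of them, and its main quantitative device provably falls short of the stated constant.
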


\begin{proof}
Define $\tilde{q}(t) = q_i(t) - q_j(t)$ and $\tilde{p}(t) = p_i(t) - p_j(t)$.  We will show that both $\tilde{q}$ and $\tilde{p}$ are negative and have negative derivatives with respect to time.  Then the region with $\tilde{q} < 0$ and $\tilde{p} < 0$ is forward-invariant (up to collision time), which implies the result.

We first have $\frac{d\tilde{q}}{dt} = p_i(t) - p_j(t) < 0$.  By assumption, $\tilde{q}(t_0) < 0$.  On the other hand, using the Hamiltonian relation gives
\begin{equation}
\frac{d\tilde{p}}{dt} = \frac{4(q_j - q_i)}{(q_i^2+q_j^2)^{3/2}} + \frac{4q_j}{(q_j^2+q_k^2)^{3/2}} - \frac{4q_i}{(q_i^2+q_k^2)^{3/2}} + \frac{1}{2q_j^2} - \frac{1}{2q_i^2}
\end{equation}
Set $q_i = aq_j$ and $q_k = bq_j$.  The assumption $0 < q_i < q_j$ implies that $a \in [0,1]$.  On the other hand, $b$ can be any positive real number.  Making these substitutions and simplifying gives
\begin{equation}
\frac{1}{q_j^2}\left(\frac{4(1-a)}{(a^2+1)^{3/2}} + \frac{4}{(b^2+1)^{3/2}} - \frac{4a}{(a^2+b^2)^{3/2}} + \frac{1}{2} - \frac{1}{2a^2} \right).
\end{equation}
As $q_j^{-2}$ is positive, we note that as $a \to 0^+$, this quantity is negative. \\

Define
\begin{equation}
f(a,b) = \frac{4(1-a)}{(a^2+1)^{3/2}} + \frac{4}{(b^2+1)^{3/2}} - \frac{4a}{(a^2+b^2)^{3/2}} + \frac{1}{2} - \frac{1}{2a^2}.
\end{equation}
A plot of the region where $f(a,b) < 0$ is given in Figure \ref{abregion}. \\

\begin{figure}[h]
\includegraphics[scale=.25]{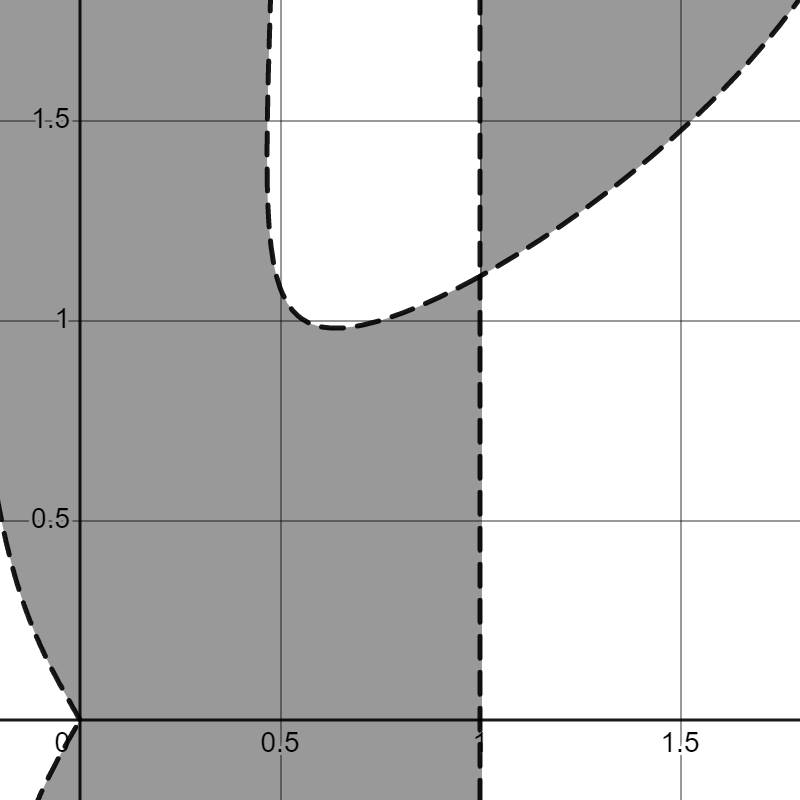}
\caption{A plot of the region where $f(a,b) < 0$.  Here $a$ lies on the horizontal axis and $b$ on the vertical.}
\label{abregion}
\end{figure}

Direct analysis of this function is difficult.  To find the equations of the vertical asymptotes, we consider the equation $f(a,b)=0$ and take the limit as $b \to \infty$.  This yields
\begin{equation}
\frac{4(1-a)}{(a^2+1)^{3/2}} + \frac{1}{2} - \frac{1}{2a^2} = 0.
\end{equation}
With suitable re-arrangement, this can be converted to an eighth-degree polynomial.  We find a numerical estimate of the real root satisfying $0 < a < 1$ is approximately $a \approx 0.523143$.  \\

Numerically we find that for any value of $0 < a < 0.4$, we have $f(a,b)$ (and consequently $\frac{d\tilde{p}}{dt}$) are both less than zero.  Moreover, it is a simple exercise to show that if $\frac{d\tilde{p}}{dt} < 0$ and $0 < a < 0.4$, then the inequality on $a$ is maintained up to collision time, establishing the result.
\end{proof}

We note that in Lemma \ref{CollisionOrdering}, there are other ways to guarantee the forward-invariance of the $\tilde{q} < 0$, $\tilde{p} < 0$ region which include placing inequalities relating to $b$.  However, for simplicity of writing computer code, the version presented is preferred. \\

We can find the value of $\alpha^*$ as defined in Lemma \ref{alphastarlemma} by integrating initial conditions as defined in Equation \ref{initialGamma} and using Lemma \ref{CollisionOrdering} to determine cutoff conditions.  Doing so, we find $\alpha^* = 3.53652$.  We can further use Lemma \ref{CollisionOrdering} to find the boundary of the region $\Sigma$.  Specifically, once we identify two points $(\alpha_1, \beta_1) \in \Sigma$ and $(\alpha_2, \beta_2) \not \in \Sigma$, then by the Intermediate Value Theorem a boundary point of $\Sigma$ must lie on the line segment connecting the two points.  As locating this boundary involves finding initial conditions for an unregularized quadruple collision, Lemma \ref{CollisionOrdering} is vital to reduce the calculation time.  Results of this are shown in Figure \ref{SigmaRegion}.\\

\begin{figure}[h]

\includegraphics[scale=.5]{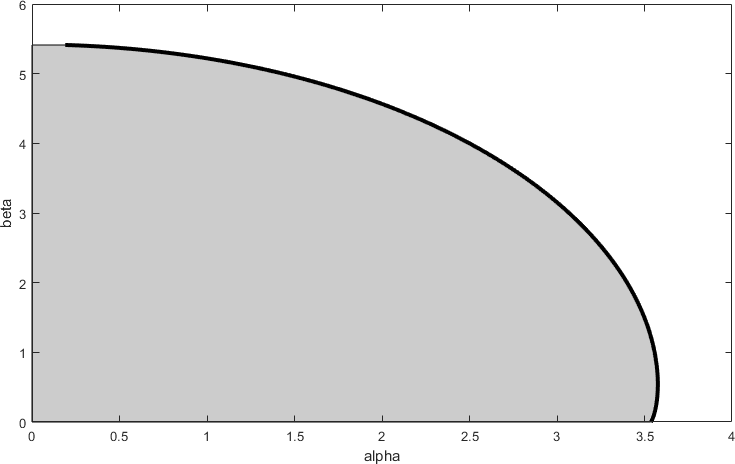}
\caption{The result of a numerical calculation of the region $\Sigma$ with cutoffs based on Lemma \ref{CollisionOrdering}.  The solid black line is the computed curve for which a quadruple collision $Q_1 = Q_2 = 0$ occurs.  For values of $\alpha$ near 0, numerical calculation was terminated before the conditions established by Lemma \ref{CollisionOrdering} were achieved.  An extrapolation of the boundary is pictured.}
\label{SigmaRegion}
\end{figure}

Starting at $\alpha = 0.5, 0.6, 0.7, ...$, we use a bisection method to find the value of $\beta$ as defined in Lemma \ref{Existence2}.  If $Q_3(\tau) > 0$ at the point $(\alpha, \beta)$ corresponding to quadruple collision, then Lemma \ref{Existence3} guarantees the existence of a point $(\alpha, \beta) \in \Sigma$ that corresponds to $P_3(\tau) = 0$.  Accordingly, with initial conditions as defined in Equation \ref{initialGamma}, we integrate the equations of motion using a fixed step-size Runge-Kutta method until the conditions of Lemma \ref{Existence1} are satisfied.  We then vary $\beta$ to find the value for which $P_3(\tau) = 0$.  The results are graphed in Figure \ref{alphabetagraph}.\\

\begin{figure}[h]
\includegraphics[scale=.5]{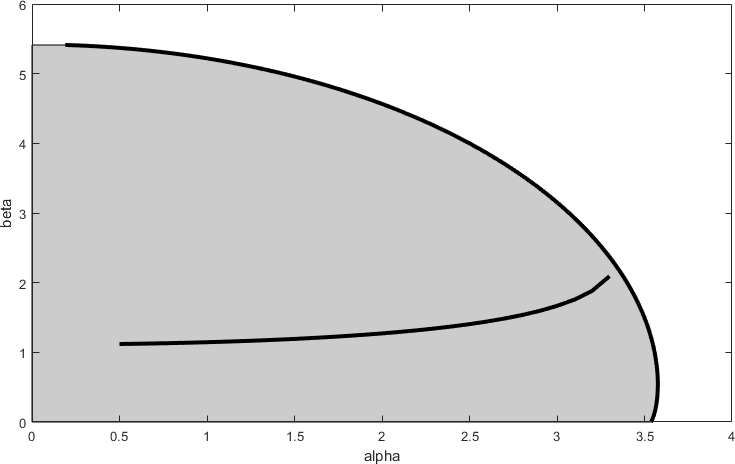}
\caption{The value of $\beta$ (vertical axis) plotted against $\alpha$ (horizontal axis) for which Lemma \ref{Existence2} is satisfied.  The points plotted correspond to $\alpha \in \{.5, .6, ..., 3.3\}$.  For $\alpha = 3.4$, the point $(\alpha, \beta)$ lies outside the region $\Sigma$.}
\label{alphabetagraph}
\end{figure}

Next, along this curve, we compute the value $P_1(\tau) + P_2(\tau)$.  The results are graphed in Figure \ref{pxpygraph}.  For $\alpha = 0.5$, we have $P_1(\tau) + P_2(\tau) = 3.475$, and for $\alpha = 3.3$ we have $P_1(\tau) + P_2(\tau) = -4.112$, establishing Lemma \ref{Existence3}. \\

\begin{figure}[h]
\includegraphics[scale=.5]{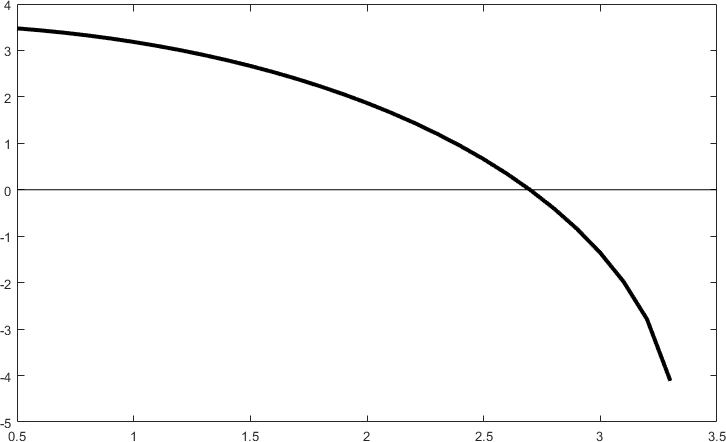}
\caption{The value of $P_1(\tau)+P_2(\tau)$ (vertical axis) plotted against $\alpha$ (horizontal axis) where $\beta = \beta(\alpha)$ from Lemma \ref{Existence2}.}
\label{pxpygraph}
\end{figure}

We then repeatedly refine the value of $\alpha$ around the value closest to $P_1(\tau) + P_2(\tau) = 0$, which is around $\alpha = 2.7$, and repeat the process, working to find the zero of $P_1(\tau) + P_2(\tau)$.  We find the appropriate values are
\begin{equation}
\alpha = 2.698372, \quad \beta = 1.484464.
\label{alphabeta}
\end{equation}
The full period of the regularized orbit is given by
\begin{equation}
12\tau = 0.527482.
\label{periodlength}
\end{equation}
A graph of numerical integration of the regularized equations of motion is given in Figure \ref{RegularizedIntegration}.

\begin{figure}[h]
\includegraphics[scale=.5]{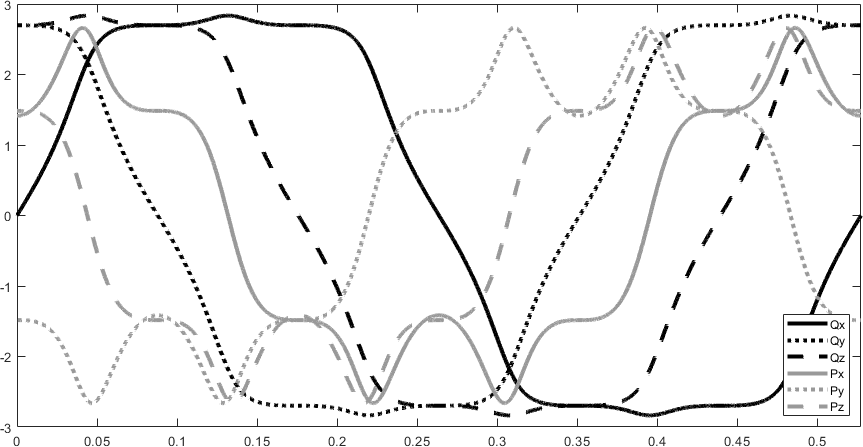}
\caption{Integration of the regularized equations of motion with the initial conditions given by Equation \ref{initialGamma}, and values of $\alpha$ and $\beta$ from Equation \ref{alphabeta}.}
\label{RegularizedIntegration}
\end{figure}

\section{Stability and Symmetry}
\label{secStabSim}

\subsection{Definitions and Preliminaries}
\label{subStabDef}
Let $\mathcal{O}(\gamma_0)$ be the set of all points in $\mathbb{R}^6$ traced out in forward and backward time by the solution to the regularized Hamiltonian $\Gamma$ with initial conditions $\gamma_0$.  If we use the initial conditions determined by $\alpha$ and $\beta$ in the previous section, then the time interval $0 \leq s \leq 12\tau$ captures the entire orbit and $\mathcal{O}(\gamma_0)$ is a closed loop in $\mathbb{R}^6$.  This orbit is \textit{Poincar\'e stable} if given any $\epsilon > 0$ there is some $\delta > 0$ so that for initial conditions $\tilde{\gamma}_0$ with $|\tilde{\gamma}_0 - \gamma_0| < \delta$, then any point on the orbit $\mathcal{O}(\tilde{\gamma}_0)$ is within $\epsilon$ of a point on the orbit $\mathcal{O}(\gamma_0)$. \\

Poincar\'e stability is generally difficult to establish in all but the simplest cases.  However, there is a necessary condition that can be computed.  Specifically, for a Hamiltonian system with Hamiltonian $\Gamma$ and a periodic orbit $\gamma(s)$ with period $T$, consider the matrix differential equation
\begin{equation}
\label{LinearStabilityEquation}
X' = JD^2\Gamma(\gamma(s)), \quad X(0) = I,
\end{equation}
where $D$ denotes the derivative and $J$ is the symplectic matrix
\begin{equation}
J = 
\begin{bmatrix}
0 & I \\
-I & 0
\end{bmatrix}
,
\end{equation}
with $I$ and $0$ are appropriately sized identity and zero matrices respectively.  Then the \textit{monodromy matrix} of the orbit is the matrix $X(T)$, and the orbit is \textit{linearly stable} if the eigenvalues of $X(T)$ all have complex modulus 1 and all have multiplicity one, apart from pairs of eigenvalues equal to 1 corresponding to first integrals of the system.  \\

Linear stability can be established by considering conditions other that $X(0) = I$ as well.  Specifically, if we let $X(0) = Y_0$ be the initial condition to Equation \ref{LinearStabilityEquation}, then $Y(s) = X(s)Y_0$, so $X(T) = Y(T)Y_0^{-1}$.  Hence, $Y_0^{-1}Y(T)$ is similar to the monodromy matrix $X(T)$, and linear stability can be determined from either matrix as similarity preserves eigenvalues. \\

In the octahedral setting, our choice of coordinates has already forced the integrals corresponding to center of mass, net momentum, and angular momentum to be zero.  Hence the monodromy matrix corresponding to the periodic octahedral orbit should contain one pair of eigenvalues 1 corresponding to the fixed value of the Hamiltonian.  Further, it will be shown that a particular choice of $Y_0$ simplifies the calculation.\\

\subsection{Symmetries of the Orbit}
\label{subSym}
A technique by Roberts (see \cite{bibRoberts8}) allows us to further simplify this calculation by ``factoring'' the monodromy matrix in terms of the symmetries of the orbit.  We establish the symmetries of the orbit in this section. \\

By construction of the orbit as given in Lemma \ref{symmetryconstruct}, we have that
\begin{equation}
\gamma(s + 2\tau) = S_f\gamma(s),
\end{equation}
where
\begin{equation}
\label{sfequation}
S_f =
\left[
\begin{array}{ccc|ccc}
0 & 0 & 1 & 0 & 0 & 0 \\
-1 & 0 & 0 & 0 & 0 & 0 \\
0 & 1 & 0 & 0 & 0 & 0 \\
\hline
0 & 0 & 0 & 0 & 0 & 1 \\
0 & 0 & 0 & -1 & 0 & 0 \\
0 & 0 & 0 & 0 & 1 & 0
\end{array}
\right].
\end{equation}
So $S_f$ is a time-preserving symmetry of the orbit.  This symmetry corresponds to a $120^\circ$ rotation about the line $x = y = z$, coupled with an appropriate sign change which arises in the regularized setting. \\

A time-reversing symmetry of the orbit is given by
\begin{equation}
\gamma(-s + 2\tau) = S_r\gamma(s),
\end{equation}
where
\begin{equation}
\label{srequation}
S_r =
\left[
\begin{array}{ccc|ccc}
0 & 1 & 0 & 0 & 0 & 0 \\
1 & 0 & 0 & 0 & 0 & 0 \\
0 & 0 & 1 & 0 & 0 & 0 \\
\hline
0 & 0 & 0 & 0 & -1 & 0 \\
0 & 0 & 0 & -1 & 0 & 0 \\
0 & 0 & 0 & 0 & 0 & -1
\end{array}
\right].
\end{equation}
This can be proven using a similar technique as shown in Lemma \ref{symmetryconstruct}.  Setting $s = \tau$ gives $\gamma(\tau) = S_r\gamma(\tau)$, implying that $\gamma(\tau)$ is an eigenvector of $S_r$ with eigenvalue 1.  Directly computing this, we find that $\gamma(\tau)$ must be of the form
\begin{equation}
\gamma(\tau) = (a, a, b, c, -c, 0)^T
\end{equation}
for suitable values of $a$, $b$, and $c$.  Note that this is exactly what was found in Lemma \ref{symmetryconstruct2}.  \\

\subsection{Roberts's Symmetry-Reduction Technique}
\label{subRoberts}

The general results in this section are presented, with proof, in Section 2 of \cite{bibRoberts8}.  Statements of the results are included here for convenience.  The application of each result to the octahedral orbit is given after each statement.  Results similar to Lemmas \ref{Wform}-\ref{KEig} also appear in \cite{bibRoberts8}, but the form presented in this section is specifically applied to the octahedral orbit.

\begin{lemma}[Lemma 2.1 from \cite{bibRoberts8}]
\label{Roberts21}
Suppose that $\gamma(s)$ is a $T$-periodic solution of a Hamiltonian system with Hamiltonian $\Gamma$ and time-preserving symmetry $S$ such that
\begin{enumerate}
\item There exists some $N \in \mathbb{N}$ such that $\gamma(s + T/N) = S\gamma(s)$ for all $s$,
\item $\Gamma(Sx) = \Gamma(x)$,
\item $SJ = JS$, and
\item $S$ is orthogonal.
\end{enumerate}
Then the fundamental matrix solution $X(s)$ to the linearization problem $\dot{X} = JD^2\Gamma(\gamma(s))X$ with $X(0) = I$ satisfies
\begin{equation}
X(s + T/N) = SX(s)S^TX(T/N).
\end{equation}
\end{lemma}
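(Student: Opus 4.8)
The plan is to prove the identity by the standard uniqueness argument for linear matrix ODEs. Abbreviating $A(s) = JD^2\Gamma(\gamma(s))$, the fundamental matrix solves the linear system $\dot{X} = A(s)X$ with $X(0) = I$. I would then introduce the candidate $W(s) = SX(s)S^TX(T/N)$ and show that it satisfies the \emph{same} linear ODE as $Z(s) = X(s+T/N)$ with the \emph{same} value at $s = 0$. Since solutions to a linear initial value problem are unique, this forces $W(s) = Z(s)$ for all $s$, which is exactly the claimed identity.

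First I would match the initial conditions. Because $S$ is orthogonal by hypothesis (4), $SS^T = I$, so $W(0) = SX(0)S^TX(T/N) = SIS^TX(T/N) = X(T/N) = Z(0)$. Next comes the core of the argument: showing that the coefficient matrix transforms by conjugation under the time shift, namely $A(s+T/N) = SA(s)S^T$. To get this I would differentiate the invariance hypothesis (2), $\Gamma(Sx) = \Gamma(x)$, twice. A first differentiation gives $S^T(\nabla\Gamma)(Sx) = \nabla\Gamma(x)$, hence $(\nabla\Gamma)(Sx) = S\nabla\Gamma(x)$ after multiplying by $S$; differentiating again yields the Hessian intertwining relation $D^2\Gamma(Sx) = SD^2\Gamma(x)S^T$. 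Combining this with hypothesis (1) in the form $\gamma(s+T/N) = S\gamma(s)$ gives $D^2\Gamma(\gamma(s+T/N)) = SD^2\Gamma(\gamma(s))S^T$, and then hypothesis (3), $JS = SJ$, produces
\[
A(s+T/N) = JD^2\Gamma(\gamma(s+T/N)) = JSD^2\Gamma(\gamma(s))S^T = SJD^2\Gamma(\gamma(s))S^T = SA(s)S^T.
\]

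With the conjugation relation in hand, I would differentiate the candidate directly. Using $\dot X(s) = A(s)X(s)$ and then $SA(s) = A(s+T/N)S$ (equivalent to $A(s+T/N) = SA(s)S^T$ via $S^TS = I$),
\[
\dot{W}(s) = S\dot{X}(s)S^TX(T/N) = SA(s)X(s)S^TX(T/N) = A(s+T/N)SX(s)S^TX(T/N) = A(s+T/N)W(s).
\]
Since $Z(s) = X(s+T/N)$ satisfies $\dot{Z} = A(s+T/N)Z$ with $Z(0) = X(T/N) = W(0)$, uniqueness of solutions to the linear matrix ODE gives $X(s+T/N) = SX(s)S^TX(T/N)$, as desired.

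The only genuinely non-formal step, and hence the part I expect to require the most care, is establishing the Hessian intertwining relation $D^2\Gamma(Sx) = SD^2\Gamma(x)S^T$: the two chain-rule differentiations must each correctly track the factor of $S$ introduced by the inner map $x \mapsto Sx$, and the orthogonality of $S$ is used to clear $S^T$ from the gradient identity before the second differentiation. Everything after that, including the commutation $JS = JS$ and the matching of initial data, is routine bookkeeping with the orthogonality relation $SS^T = S^TS = I$.
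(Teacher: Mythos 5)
Your proof is correct and follows essentially the same route as the cited source (the paper itself defers the proof to Section 2 of Roberts's work, which uses exactly this argument): derive the Hessian intertwining relation $D^2\Gamma(Sx) = SD^2\Gamma(x)S^T$ from the invariance $\Gamma(Sx)=\Gamma(x)$, combine it with $\gamma(s+T/N)=S\gamma(s)$ and $SJ=JS$ to get $A(s+T/N)=SA(s)S^T$, and conclude by uniqueness for the linear matrix ODE. The only blemish is the typo ``$JS = JS$'' near the end, which should read $SJ = JS$.
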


We note that the matrix $S = S_f$ from Equation \ref{sfequation} satisfies all of these hypotheses with $T = 12\tau$ and $N = 6$. \\

\begin{corollary}[Corollary 2.2 from \cite{bibRoberts8}]
\label{Roberts22}
Given the hypotheses of Lemma \ref{Roberts21}, the fundamental matrix solution $X(s)$ satisfies
\begin{equation}
X(kT/N) = S^k(S^TX(T/N))^k
\end{equation}
for any $k \in \mathbb{N}$.
\end{corollary}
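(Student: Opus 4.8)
The plan is to prove the identity by induction on $k$, using the one-step recurrence from Lemma~\ref{Roberts21} as the engine. Writing $h = T/N$ for brevity, Lemma~\ref{Roberts21} supplies the relation $X(s + h) = S X(s) S^T X(h)$, valid for all $s$. Specializing $s$ to integer multiples of $h$ converts this into a recursion for the values $X(kh)$, and the corollary is precisely the closed-form solution of that recursion. So the whole argument reduces to verifying that the proposed formula satisfies the base case and is propagated by one application of the recurrence.

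For the base case $k = 1$, I would check directly that $S^1 (S^T X(h))^1 = S S^T X(h) = X(h)$, where the last equality uses that $S$ is orthogonal (hypothesis (4) of Lemma~\ref{Roberts21}), so that $S S^T = I$. This is the only point at which orthogonality of $S$ is invoked.

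For the inductive step, assume $X(kh) = S^k (S^T X(h))^k$. Applying the recurrence with $s = kh$ gives
\begin{equation}
X((k+1)h) = S\, X(kh)\, S^T X(h),
\end{equation}
and substituting the inductive hypothesis yields
\begin{equation}
X((k+1)h) = S \cdot S^k (S^T X(h))^k \cdot S^T X(h) = S^{k+1}(S^T X(h))^{k+1},
\end{equation}
where in the final step I collect the leading factor $S \cdot S^k = S^{k+1}$ and absorb the trailing $S^T X(h)$ into $(S^T X(h))^k$ to form the $(k+1)$-st power. This closes the induction and establishes the claim for all $k \in \mathbb{N}$.

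There is no genuine obstacle here: the argument is a clean bookkeeping induction. The only subtlety worth flagging is that the matrix factors need not commute, so care must be taken to keep them in the order dictated by the recurrence — the leading powers of $S$ and the interleaved factors $S^T X(h)$ must line up exactly as written, and one must resist the temptation to commute $S$ past $X$ or $S^T$.
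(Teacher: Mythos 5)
Your induction is correct and is exactly the standard derivation of this corollary from Lemma \ref{Roberts21}: the paper itself omits the proof (deferring to Section 2 of \cite{bibRoberts8}), and the argument there is the same one-step recurrence $X(s+T/N)=SX(s)S^TX(T/N)$ specialized at $s=kT/N$ and iterated, with orthogonality of $S$ handling the base case. Nothing is missing.
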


In the case of the octahedral orbit, this gives us that $X(12\tau) = (S_f^TX(2\tau))^6$, as $S_f^6 = I$.

\begin{lemma}[Lemma 2.4 from \cite{bibRoberts8}]
\label{Roberts24}
Suppose that $\gamma(s)$ is a $T$-periodic solution of a Hamiltonian system with Hamiltonian $\Gamma$ and time-reversing symmetry $S$ such that
\begin{enumerate}
\item There exists some $N \in \mathbb{N}$ such that $\gamma(-s + T/N) = S\gamma(s)$ for all $s$,
\item $\Gamma(Sx) = \Gamma(x)$,
\item $SJ = -JS$, and
\item $S$ is orthogonal.
\end{enumerate}
Then the fundamental matrix solution $X(s)$ to the linearization problem $\dot{X} = JD^2\Gamma(\gamma(s))X$ with $X(0) = I$ satisfies
\begin{equation}
X(-s + T/N) = SX(s)S^TX(T/N).
\end{equation}
\end{lemma}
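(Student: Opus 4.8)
The plan is to reduce the statement to a uniqueness argument for a linear matrix ODE, mirroring the proof of the time-preserving case (Lemma \ref{Roberts21}); the only genuinely new features are the sign introduced by reversing time and the anticommutation $SJ = -JS$. Throughout, write $B(s) = JD^2\Gamma(\gamma(s))$ for the coefficient matrix, so that $X$ solves $\dot X = B(s)X$ with $X(0) = I$. The whole proof hinges on showing that the two matrix functions $X(-s+T/N)$ and $SX(s)S^TX(T/N)$ solve one and the same linear system with matching initial data.

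First I would record how the symmetry of $\Gamma$ passes to its Hessian. Differentiating the identity $\Gamma(Sx) = \Gamma(x)$ once gives $S^T\nabla\Gamma(Sx) = \nabla\Gamma(x)$, and differentiating a second time gives $S^TD^2\Gamma(Sx)\,S = D^2\Gamma(x)$; since $S$ is orthogonal (hypothesis (4)) this rearranges to $D^2\Gamma(Sx) = S\,D^2\Gamma(x)\,S^T$. This uses only hypotheses (2) and (4) and is identical to the step underlying the time-preserving lemma. Combining it with the time-reversing relation $\gamma(-s+T/N) = S\gamma(s)$ of hypothesis (1) yields
\[
B(-s+T/N) = J\,D^2\Gamma(S\gamma(s)) = J\,S\,D^2\Gamma(\gamma(s))\,S^T,
\]
and then the anticommutation $JS = -SJ$ of hypothesis (3) converts this into the key intertwining relation $B(-s+T/N) = -S\,B(s)\,S^T$, equivalently $S\,B(s)\,S^T = -B(-s+T/N)$. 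This is precisely where the sign distinguishing the time-reversing case enters.

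Next I would compare the two candidate solutions. Set $W(s) = X(-s+T/N)$ and $Z(s) = SX(s)S^TX(T/N)$. Differentiating $W$ produces a chain-rule factor $-1$, so $\dot W(s) = -B(-s+T/N)\,W(s)$ with $W(0) = X(T/N)$. For $Z$, one computes $\dot Z(s) = S\,B(s)\,X(s)\,S^TX(T/N) = \bigl(S\,B(s)\,S^T\bigr)Z(s) = -B(-s+T/N)\,Z(s)$, where the middle equality uses $S^TS = I$ and the last uses the intertwining relation; moreover $Z(0) = SS^TX(T/N) = X(T/N) = W(0)$. Since $W$ and $Z$ satisfy the same linear equation with the same initial value, uniqueness of solutions forces $W = Z$, which is exactly $X(-s+T/N) = SX(s)S^TX(T/N)$, as claimed.

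The main obstacle is entirely bookkeeping rather than analysis: there are two independent sources of sign — the $-1$ from differentiating $X(-s+T/N)$ in $s$, and the $-1$ from $JS = -SJ$ — and the content of the lemma is that they conspire so that the final formula reads identically to the time-preserving case. Keeping these signs straight, and checking that the two candidate solutions genuinely agree at $s=0$ (so that uniqueness can be invoked), is the only place where care is required; beyond existence and uniqueness for linear matrix ODEs there is no further difficulty.
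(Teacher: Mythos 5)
Your argument is correct and is essentially the proof given in the cited source (the paper itself states this lemma without proof, quoting it from Roberts \cite{bibRoberts8}): you derive $D^2\Gamma(S x)=S\,D^2\Gamma(x)\,S^T$ from invariance and orthogonality, use $SJ=-JS$ to get $S B(s)S^T=-B(-s+T/N)$, and conclude by uniqueness for the linear ODE after checking both sides agree at $s=0$. The sign bookkeeping is handled correctly, so there is nothing to add.
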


The matrix $S = S_r$ from Equation \ref{srequation} satisfies all of these hypotheses with $T = 12\tau$ and $N = 6$.

\begin{corollary}[Corollary 2.5 from \cite{bibRoberts8}]
\label{Roberts25}
Given the hypotheses of Lemma \ref{Roberts24}, 
\begin{equation}
X(T/N) = SA^{-1}S^TA, \quad A = X(T/2N).
\end{equation}
\end{corollary}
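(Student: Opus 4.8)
The plan is to obtain the identity by substituting a single well-chosen value of $s$ into the conclusion of Lemma \ref{Roberts24} and then solving the resulting matrix equation for $X(T/N)$ using the orthogonality of $S$. All of the structural content is already packaged in the relation
\begin{equation}
X(-s + T/N) = SX(s)S^TX(T/N),
\end{equation}
so the corollary is essentially an algebraic consequence of evaluating this identity at the fixed point of the time-reversal map $s \mapsto -s + T/N$.

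First I would note that $-s + T/N = s$ holds exactly when $s = T/(2N)$, so this is the natural substitution to make. Writing $A = X(T/(2N))$ and setting $s = T/(2N)$, the left-hand side collapses to $X(T/(2N)) = A$, while the right-hand side becomes $SAS^TX(T/N)$, giving the relation
\begin{equation}
A = SAS^TX(T/N).
\end{equation}

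Next I would solve this for $X(T/N)$. By hypothesis (4) of Lemma \ref{Roberts24}, $S$ is orthogonal, so $S^T = S^{-1}$ and the product $SAS^T$ is invertible with inverse $SA^{-1}S^T$. Left-multiplying the previous relation by $SA^{-1}S^T$ then yields
\begin{equation}
X(T/N) = SA^{-1}S^TA,
\end{equation}
which is exactly the claimed identity.

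I do not expect any genuine obstacle here; the only points requiring care are recognizing that $s = T/(2N)$ is the correct fixed point of the time-reversal map and computing $(SAS^T)^{-1}$ correctly as $SA^{-1}S^T$ (rather than $S^TA^{-1}S$) using orthogonality. Specialized to the octahedral orbit, where $S = S_r$, $T = 12\tau$, and $N = 6$, this corollary expresses the quarter-period matrix $X(2\tau)$ in terms of $A = X(\tau)$, and together with Corollary \ref{Roberts22} it will reduce the full monodromy computation to data gathered over only the first twelfth of the orbit.
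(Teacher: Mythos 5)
Your proof is correct and is essentially the standard argument for this corollary (the paper itself defers to Roberts for the proof, and Roberts's proof is exactly this substitution of the fixed point $s = T/(2N)$ into the conclusion of Lemma \ref{Roberts24} followed by inverting $SAS^T$ via orthogonality). The algebra $(SAS^T)^{-1} = SA^{-1}S^T$ is handled correctly, so there is nothing to add.
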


In the case of the octahedral orbit, noting that $S_r^T = S_r$ gives $X(2\tau) = S_rA^{-1}S_rA$ with $A = X(\tau)$.  Combining this with the earlier result, this gives us that the monodromy matrix of the octahedral orbit is $X(12\tau) = (S_f^TS_rA^{-1}S_rA)^6$.  Hence, we can evaluate the stability of the orbit by evaluating the relevant differential equations along only a twelfth of the orbit. \\

Roberts also gives similar results for the case where the initial conditions given in Equation \ref{LinearStabilityEquation} are not the identity matrix.  These are listed below.

\begin{corollary}[Remark following Corollary 2.2 in \cite{bibRoberts8}]
\label{RobertsRemark1}
If $Y(s)$ is the fundamental matrix solution with $X(0) = Y_0$, then
\begin{equation}
Y(s + T/N) = SY(s)Y_0^{-1}S^TY(T/N),
\end{equation}
and so
\begin{equation}
Y(kT/N) = S^kY_0(Y_0^{-1}S^TY(T/N))^k
\end{equation}
\end{corollary}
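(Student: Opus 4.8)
The plan is to reduce everything to the already-established symmetry relation of Lemma \ref{Roberts21} by transporting it through the change of initial data. The first thing I would record is the dictionary relating the two fundamental matrix solutions: since both $Y(s)$ and $X(s)Y_0$ solve the common linear matrix ODE $\dot{Z} = JD^2\Gamma(\gamma(s))Z$ and agree at $s=0$ (each equals $Y_0$), uniqueness of solutions to linear ODEs forces $Y(s) = X(s)Y_0$ for all $s$, hence $X(s) = Y(s)Y_0^{-1}$ wherever $Y_0$ is invertible. This is the same observation already noted in Section \ref{subStabDef}, and it is the only structural input beyond Lemma \ref{Roberts21}.

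With this dictionary in hand, the first displayed identity is a direct substitution. I would begin from the conclusion $X(s + T/N) = SX(s)S^TX(T/N)$ of Lemma \ref{Roberts21}, multiply on the right by $Y_0$ so that the left side becomes $Y(s+T/N)$, and then replace each $X$ by $Y(\cdot)Y_0^{-1}$. The trailing factor $Y_0^{-1}Y_0$ collapses to the identity, leaving exactly $Y(s+T/N) = SY(s)Y_0^{-1}S^TY(T/N)$, which is the first claim.

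The second identity I would obtain by induction on $k$. Abbreviating $M = Y_0^{-1}S^TY(T/N)$ so that the first identity reads $Y(s+T/N) = SY(s)M$, the base case $k=0$ is immediate, and the $k=1$ case reduces to $SY_0Y_0^{-1}S^TY(T/N) = SS^TY(T/N) = Y(T/N)$ using that $S$ is orthogonal (hypothesis (4) of Lemma \ref{Roberts21}, so $SS^T = I$). For the inductive step I would evaluate the first identity at $s = kT/N$ and substitute the inductive hypothesis $Y(kT/N) = S^kY_0M^k$, obtaining $Y((k+1)T/N) = S\,(S^kY_0M^k)\,M = S^{k+1}Y_0M^{k+1}$, which is precisely $S^kY_0(Y_0^{-1}S^TY(T/N))^k$ with $k$ replaced by $k+1$.

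No step here is a genuine obstacle; the content is essentially bookkeeping, which is why Roberts states it only as a remark. The two points deserving care are the orthogonality of $S$, which is exactly what makes the telescoping close cleanly in the induction, and the standing invertibility of $Y_0$, without which the dictionary $X(s) = Y(s)Y_0^{-1}$ is meaningless. Since the $Y_0$ we will eventually employ is chosen precisely to be an invertible matrix that simplifies the monodromy computation, this assumption is harmless in the intended application.
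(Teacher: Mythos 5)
Your proposal is correct and is essentially the intended argument: the paper does not reprove this statement (it defers to the remark in \cite{bibRoberts8}), but the only input beyond Lemma \ref{Roberts21} is precisely the dictionary $Y(s) = X(s)Y_0$, which the paper itself records in Section \ref{subStabDef}, and your substitution plus induction using $SS^T = I$ reproduces both displayed identities exactly. Nothing further is needed.
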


\begin{corollary}[Remark following Corollary 2.5 in \cite{bibRoberts8}]
\label{RobertsRemark2}
If $Y(s)$ is the fundamental matrix solution with $X(0) = Y_0$, then
\begin{equation}
Y(-s + T/n) = SY(s)Y_0^{-1}S^TY(T/N),
\end{equation}
and so
\begin{equation}
Y(T/N) = SY_0B^{-1}S^TB, \quad B = Y(T/2N)\textcolor{red}{.}
\end{equation}
\end{corollary}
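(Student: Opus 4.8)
The plan is to reduce the statement to Lemma \ref{Roberts24} by expressing the general fundamental matrix solution in terms of the one with identity initial data. Since the coefficient matrix $JD^2\Gamma(\gamma(s))$ depends only on $s$, the solution of the linearization problem with initial condition $Y(0) = Y_0$ is $Y(s) = X(s)Y_0$, where $X(s)$ is the fundamental solution with $X(0) = I$ that appears in Lemma \ref{Roberts24}. This factorization is the only structural fact needed; everything else is linear algebra.

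First I would verify the first displayed identity by direct substitution. Replacing $Y$ by $XY_0$ on the right-hand side gives
\begin{equation}
SY(s)Y_0^{-1}S^TY(T/N) = SX(s)Y_0Y_0^{-1}S^TX(T/N)Y_0 = \left(SX(s)S^TX(T/N)\right)Y_0.
\end{equation}
By Lemma \ref{Roberts24} the parenthesized factor equals $X(-s + T/N)$, so the right-hand side is $X(-s + T/N)Y_0 = Y(-s + T/N)$, which is exactly the left-hand side. Note that the factors $Y_0$ and $Y_0^{-1}$ sitting between $S$ and $S^T$ cancel precisely, which is what makes the identity take the stated form rather than acquiring extra conjugating factors.

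Next I would specialize to $s = T/(2N)$, so that $-s + T/N = T/(2N)$. Writing $B = Y(T/(2N))$, the first identity then collapses to $B = SBY_0^{-1}S^TY(T/N)$. Since $S$ is orthogonal, $S^{-1} = S^T$, and both $B$ and $Y_0$ are invertible, so I can isolate $Y(T/N)$ by multiplying on the left successively by $S^T$, then by $Y_0B^{-1}$, and finally by $S$, using $S^TS = I$, $Y_0B^{-1}\cdot BY_0^{-1} = I$, and $SS^T = I$ at each stage. This yields
\begin{equation}
Y(T/N) = SY_0B^{-1}S^TB,
\end{equation}
as claimed.

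I do not expect any genuine obstacle here: the only points demanding care are respecting the non-commutativity of the matrix products and invoking orthogonality $S^T = S^{-1}$ at the correct moment. As a consistency check one can recover the formula from the special case $Y_0 = I$ (Corollary \ref{Roberts25}): there $X(T/N) = SA^{-1}S^TA$ with $A = X(T/(2N))$, and since $B = X(T/(2N))Y_0 = AY_0$, substituting $A = BY_0^{-1}$ into $Y(T/N) = X(T/N)Y_0$ reproduces $Y(T/N) = SY_0B^{-1}S^TB$.
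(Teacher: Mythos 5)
Your proposal is correct: the factorization $Y(s) = X(s)Y_0$ reduces the identity to Lemma \ref{Roberts24}, the intermediate cancellation of $Y_0Y_0^{-1}$ is exactly right, and the algebra isolating $Y(T/N)$ (using orthogonality of $S$) is sound. The paper itself offers no proof here --- it quotes the remark from \cite{bibRoberts8} and relies on precisely the observation $Y(s) = X(s)Y_0$ stated in Section \ref{subStabDef} --- so your derivation is the intended one and nothing further is needed.
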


Combining these with previous results gives that for an arbitrary $X(0) = Y_0$, the resulting matrix solution $Y(s)$ to Equation \ref{LinearStabilityEquation} satisfies
\begin{equation}
Y(12\tau) = Y_0(Y_0^{-1}S_f^TS_rY_0B^{-1}S_rB)^6,
\end{equation}
so
\begin{equation}
X(12\tau) = Y_0(Y_0^{-1}S_f^TS_rY_0B^{-1}S_rB)^6Y_0^{-1},
\end{equation}
where $B = Y(\tau)$. \\

Define $W = Y_0^{-1}S_f^TS_rY_0B^{-1}S_rB$.  Then $X(12\tau) = Y_0W^6Y_0^{-1}$, and stability of the octahedral orbit is thus reduced to determining the eigenvalues of $W$. \\

For a properly chosen initial condition matrix $Y_0$, some additional simplification of the calculation can be done.  
\begin{lemma}[Lemma 4.1 from \cite{bibRoberts8}]
Suppose that $W$ is a symplectic matrix satisfying
\begin{equation}
\frac{1}{2}(W + W^{-1}) =
\begin{bmatrix}
K & 0 \\
0 & K^T
\end{bmatrix}.
\end{equation}
Then $W$ has all eigenvalues on the unit circle if and only if the eigenvalues of $K$ lie in the real interval $[-1, 1]$.
\label{Wform}
\end{lemma}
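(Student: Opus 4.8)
The plan is to translate the entire statement into a comparison of spectra, reducing the matrix identity to the behavior of the scalar function $g(\lambda) = \tfrac{1}{2}(\lambda + \lambda^{-1})$. First I would record that a symplectic $W$ is invertible, so $W^{-1}$ is defined and commutes with $W$. Writing $M = \tfrac{1}{2}(W + W^{-1})$, I would argue that the eigenvalues of $M$ are exactly the numbers $g(\lambda)$ as $\lambda$ ranges over the spectrum $\sigma(W)$, counted with multiplicity. Since $W$ and $W^{-1}$ commute they can be simultaneously triangularized; ordering the diagonal of $W^{-1}$ to be the reciprocals of the diagonal of $W$, the diagonal of $M$ consists precisely of the entries $g(\lambda)$. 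This handles the possibly non-diagonalizable case without appealing to a common eigenbasis.

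Next I would exploit the assumed block structure. Because $M = \operatorname{diag}(K, K^{T})$ and a matrix and its transpose share the same characteristic polynomial, the spectrum of $M$ equals the spectrum of $K$, with each eigenvalue of $K$ occurring with doubled multiplicity. Combining this with the previous step identifies the multiset $\{\, g(\lambda) : \lambda \in \sigma(W) \,\}$ with two copies of $\sigma(K)$. Thus every eigenvalue of $K$ has the form $g(\lambda)$ for some $\lambda \in \sigma(W)$, and conversely every value $g(\lambda)$ is an eigenvalue of $K$.

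The analytic heart of the argument is the equivalence $|\lambda| = 1$ if and only if $g(\lambda)$ is real and lies in $[-1,1]$. For the forward implication, $\lambda = e^{i\theta}$ gives $g(\lambda) = \cos\theta \in [-1,1]$. For the reverse, if $g(\lambda) = \mu$ then $\lambda$ solves $\lambda^{2} - 2\mu\lambda + 1 = 0$, whose two roots multiply to $1$; when $\mu$ is real with $\mu \in [-1,1]$ the discriminant $4\mu^{2} - 4$ is nonpositive, the two roots are complex conjugates, and their product being $1$ forces $|\lambda| = 1$. I would also record the contrapositive refinements (if $\mu$ is real with $|\mu| > 1$ the roots are real with one exceeding $1$ in modulus, and if $\mu \notin \mathbb{R}$ then $g(\lambda)$ cannot be real so $\lambda$ cannot lie on the unit circle) so that the biconditional is exact.

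Assembling the pieces proves both directions. If all eigenvalues of $W$ lie on the unit circle, then every $g(\lambda)$ lies in $[-1,1]$, so $\sigma(K) \subseteq [-1,1]$. Conversely, if $\sigma(K) \subseteq [-1,1]$, then since every eigenvalue $\lambda$ of $W$ yields $g(\lambda) \in \sigma(M) = \sigma(K) \subseteq [-1,1]$, the scalar equivalence forces $|\lambda| = 1$. The step I expect to demand the most care is the spectral identification of $M$ with $g(\sigma(W))$ in the presence of Jordan blocks, tracking that the reciprocal pairing of the eigenvalues of the symplectic $W$ is respected under simultaneous triangularization, together with the bookkeeping of multiplicities across the two-to-one map $g$; this last point is harmless here only because the conclusion concerns membership in $[-1,1]$ rather than the multiplicities themselves.
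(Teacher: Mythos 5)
Your argument is correct. Note first that the paper itself does not prove this lemma: it is quoted verbatim from Roberts (\cite{bibRoberts8}), with the remark that the proofs appear there, so there is no in-paper proof to compare against. Your route is essentially the standard one (and essentially Roberts's): reduce everything to the scalar map $g(\lambda)=\tfrac12(\lambda+\lambda^{-1})$, identify $\sigma\bigl(\tfrac12(W+W^{-1})\bigr)$ with $g(\sigma(W))$, use the block form to identify that spectrum with $\sigma(K)$ (doubled), and finish with the elementary equivalence $|\lambda|=1 \iff g(\lambda)\in[-1,1]\cap\mathbb{R}$. Your choice to establish the spectral identification by simultaneous Schur triangularization of $W$ and $W^{-1}$ is a clean way to handle non-semisimple $W$; Roberts's version can instead be run with eigenvectors ($Wv=\lambda v$ gives $Kv_1=g(\lambda)v_1$ and $K^Tv_2=g(\lambda)v_2$ for $v=(v_1,v_2)$), which yields the needed set-level containments without triangularization. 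Two small observations: your proof only uses invertibility of $W$, not symplecticity, so you have in fact proved a slightly more general statement (the symplectic hypothesis is what makes the block form attainable in practice and pairs $\lambda$ with $\lambda^{-1}$, but is not logically needed for the equivalence); and your explicit verification that $g(\lambda)$ is real in $[-1,1]$ \emph{only if} $|\lambda|=1$ is exactly the point that makes the biconditional exact, so it is right to record it.
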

Proper choice of the matrix $Y_0$ will give $W$ of the required form.

\begin{lemma}
Setting $\delta = \sqrt{2}/2$ and 
\begin{equation}
Y_0 = 
\left[
\begin{array}{ccc|ccc}
1 & 0 & 0 & 0 & 0 & 0 \\
0 & 0 & -\delta & 0 & \delta & 0 \\
0 & 0 & \delta & 0 & \delta & 0 \\
\hline
0 & 0 & 0 & 1 & 0 & 0 \\
0 & -\delta & 0 & 0 & 0 & -\delta \\
0 & -\delta & 0 & 0 & 0 & \delta
\end{array}
\right]
\label{matrixy0}
\end{equation}
gives a matrix $W$ of the form in Lemma \ref{Wform}.
\end{lemma}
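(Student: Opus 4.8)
The plan is to exploit the fact, already assembled above, that $W = PQ$ factors as a product of two involutions coming from the two time-reversing symmetries of the orbit, where $P = Y_0^{-1}S_f^TS_rY_0$ and $Q = B^{-1}S_rB$ with $B = Y(\tau)$, and that the prescribed $Y_0$ is engineered precisely to send the first factor to a standard sign-block form. Once that is in hand, the block-diagonal structure demanded by Lemma \ref{Wform} falls out essentially for free, and the orbit-dependent factor $Q$ (which is only known numerically) never has to be evaluated.

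First I would record the structural facts. Note that $S_f^TS_r$ is itself the time-reversing symmetry fixing $\gamma(0)$, since $S_f^TS_r\gamma(s) = \gamma(-s)$; like $S_r$ it is an orthogonal involution that anti-commutes with $J$, hence is anti-symplectic. Therefore $P^2 = Q^2 = I$, so $W^{-1} = QP$ and
\begin{equation}
\tfrac12(W + W^{-1}) = \tfrac12(PQ + QP).
\end{equation}
I would then verify that $Y_0$ is orthogonal and commutes with $J$ — it has the block form $\left[\begin{smallmatrix}A & B\\ -B & A\end{smallmatrix}\right]$ — so that $Y_0$ is symplectic; since $B = X(\tau)Y_0$ is then symplectic as well, $Q$ is anti-symplectic and $W = PQ$ is symplectic.

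The heart of the argument is the explicit computation that $P = Y_0^{-1}S_f^TS_rY_0 = \left[\begin{smallmatrix}-I & 0\\0 & I\end{smallmatrix}\right]$. A short calculation shows $S_f^TS_r$ is block diagonal in the position/momentum splitting with eigenvalues $\{-1,1,-1\}$ and $\{1,1,-1\}$ on the two blocks; the role of this particular $Y_0$, which deliberately mixes the position and momentum coordinates, is to conjugate this involution to the standard sign involution with three $-1$'s followed by three $+1$'s. Granting this, a one-line block multiplication gives $\tfrac12(PQ + QP) = \left[\begin{smallmatrix}-Q_{11} & 0\\0 & Q_{22}\end{smallmatrix}\right]$ for any $Q = \left[\begin{smallmatrix}Q_{11} & Q_{12}\\Q_{21} & Q_{22}\end{smallmatrix}\right]$, which is already block diagonal regardless of what $Q$ actually is.

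It then remains only to identify the lower-right block with the transpose of the upper-left, and this is automatic from symplecticity: writing $V = \tfrac12(W + W^{-1})$ and using $W^{-1} = -JW^TJ$, one checks that $V = -JV^TJ$, and a block-diagonal matrix obeying this relation necessarily has its $(2,2)$ block equal to the transpose of its $(1,1)$ block. Hence $\tfrac12(W + W^{-1}) = \left[\begin{smallmatrix}K & 0\\0 & K^T\end{smallmatrix}\right]$ with $K = -Q_{11}$, exactly the form required by Lemma \ref{Wform}. I expect the single matrix computation establishing $P = \left[\begin{smallmatrix}-I & 0\\0 & I\end{smallmatrix}\right]$ to be the main obstacle; once it is done, everything else is forced by the involution and symplectic structure, and in particular no information about the numerically-determined $Q$ is ever needed.
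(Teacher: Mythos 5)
Your argument is correct, and its core is the same as the paper's: both proofs hinge on the single explicit computation $Y_0^{-1}S_f^TS_rY_0 = \left[\begin{smallmatrix}-I & 0\\ 0 & I\end{smallmatrix}\right]$ (the paper writes this as $-Y_0^{-1}S_f^TS_rY_0 = \Lambda$ with $\Lambda = \mathrm{diag}(I,-I)$), together with the observation that $W$ is a product of two involutions, so that $W^{-1}$ is the same product in the opposite order and the off-diagonal blocks cancel in $\tfrac12(W+W^{-1})$.

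Where you diverge is in establishing that the surviving $(2,2)$ block is the transpose of the $(1,1)$ block. The paper does this by brute force on the orbit-dependent factor: it writes $B$ in $3\times 3$ blocks, invokes the explicit formula $B^{-1} = \left[\begin{smallmatrix}B_4^T & -B_2^T\\ -B_3^T & B_1^T\end{smallmatrix}\right]$ for the inverse of a symplectic matrix, and multiplies out $-B^{-1}S_rB$ to exhibit it as $\left[\begin{smallmatrix}K^T & L_1\\ -L_2 & -K\end{smallmatrix}\right]$. You instead deduce the same relation abstractly from $W^{-1} = -JW^TJ$: the identity $V = -JV^TJ$ for $V = \tfrac12(W+W^{-1})$ forces any block-diagonal such $V$ to have the form $\mathrm{diag}(K, K^T)$. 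Your route is cleaner in that it never touches the blocks of $B$, but it shifts the burden onto verifying that $W$ is genuinely symplectic, which requires checking that $Y_0$ itself is symplectic (orthogonal and commuting with $J$) so that $B = X(\tau)Y_0$ is symplectic and $Q = B^{-1}S_rB$ is anti-symplectic; you do address this, and the check goes through since $Y_0$ has the block form $\left[\begin{smallmatrix}A & B\\ -B & A\end{smallmatrix}\right]$. The paper quietly assumes ``$B$ is symplectic'' without this verification, so in that one respect your write-up is actually more complete. Both arguments ultimately rest on the same unavoidable matrix computation for $P$, which you correctly flag as the main obstacle.
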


\begin{proof}
Let
\begin{equation}
\Lambda =
\begin{bmatrix}
I & 0 \\
0 & -I \\
\end{bmatrix}
\end{equation}
where $I$ and $0$ represent $3 \times 3$ identity and zero matrices, respectively.  Then direct calculation yields $-Y_0^{-1}S_f^TS_rY_0 = \Lambda$. \\

Set $D = -B^{-1}S_rB$.  Then by definition of $W$ we have that $W = \Lambda D$.  Since $D^2 = \Lambda^2 = I$, then we know that $W^{-1} = D\Lambda$.  Since $B$ is symplectic, writing
\begin{equation}
B =
\begin{bmatrix}
B_1 & B_2 \\
B_3 & B_4
\end{bmatrix}
\quad \text{ and } \quad
S_r =
\begin{bmatrix}
S & 0 \\
0 & -S
\end{bmatrix},
\end{equation}
then the formula for the inverse of a symplectic matrix gives
\begin{equation}
B^{-1} =
\begin{bmatrix}
B_4^T & -B_2^T \\
-B_3^T & B_1^T
\end{bmatrix}.
\end{equation}
Directly computing $D$ gives

\begin{equation}
D =
\begin{bmatrix}
K^T & L_1 \\
-L_2 & -K
\end{bmatrix}
\end{equation}
with $K$, $L_1$, and $L_2$ defined up to sign by matrix multiplication.  Then
\begin{equation}
W = \Lambda D =
\begin{bmatrix}
K^T & L_1 \\
L_2 & K
\end{bmatrix}
\quad \text{ and } \quad
W^{-1} = D \Lambda =
\begin{bmatrix}
K^T & -L_1 \\
-L_2 & K
\end{bmatrix}
\end{equation}
and
\begin{equation}
\frac{1}{2}(W + W^{-1}) =
\begin{bmatrix}
K^T & 0 \\
0 & K
\end{bmatrix}
\end{equation}
as required.
\end{proof}

As noted earlier, our coordinate system has already made use of the first integrals corresponding to center of mass, net momentum, and angular momentum in this setting.  There is an additional pair of eigenvalues $1$ in the monodromy matrix corresponding to the remaining first integral, the Hamiltonian itself.  These can be found, with eigenvector, as shown below. \\

\begin{lemma}
The matrix $K^T$ has a right eigenvector $[1 \ 0 \ 0]^T$ with corresponding  eigenvalue $1$.
\label{KEig}
\end{lemma}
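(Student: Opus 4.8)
The plan is to identify the eigenvalue-$1$ eigenvector of $K^T$ with the tangent direction of the flow at $\gamma(0)$, pulled back through $Y_0$. The opening observation is purely computational. Differentiating the equation of motion $\dot\gamma = J\nabla\Gamma(\gamma)$ shows that $\dot\gamma(s)$ solves the linearized system \ref{LinearStabilityEquation}, and I would first show that $\dot\gamma(0)$ has a very special form. Evaluating the regularized equations of motion at $\gamma(0) = (0,\alpha,\alpha,\sqrt2,-\beta,\beta)^T$ via $\dot Q_i = P_iQ_j^2Q_k^2/2$ (Equation \ref{collisionCondition}) and $\dot P_i = -\partial\Gamma/\partial Q_i$, I expect every component of $\dot\gamma(0)$ to vanish except $\dot Q_1 = \sqrt2\,\alpha^4/2$. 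The position components $\dot Q_2,\dot Q_3$ vanish because each carries a factor $Q_1^2 = 0$; the momentum components $\dot P_i$ vanish because every term of $\Gamma$ that involves $Q_1$ does so through a factor $Q_1^2$ (including inside the radicals $\sqrt{Q_1^4+Q_j^4}$), so each $\partial\Gamma/\partial Q_i$ retains a factor of $Q_1$ and dies at the collision. Hence $\dot\gamma(0) = \tfrac{\sqrt2}{2}\alpha^4\,e_1$, where $e_1 = (1,0,0,0,0,0)^T$.

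Next I would promote this to a statement about $W$. Differentiating the time-preserving symmetry $\gamma(s+2\tau)=S_f\gamma(s)$ gives $\dot\gamma(2\tau)=S_f\dot\gamma(0)$, while the fundamental solution satisfies $X(2\tau)\dot\gamma(0)=\dot\gamma(2\tau)$; since $S_f$ is orthogonal, $S_f^TX(2\tau)\dot\gamma(0)=\dot\gamma(0)$, so $\dot\gamma(0)$ is an eigenvector of $S_f^TX(2\tau)$ with eigenvalue $1$. Using $B = X(\tau)Y_0$ together with $X(2\tau) = S_rX(\tau)^{-1}S_rX(\tau)$ from Corollary \ref{Roberts25}, I would simplify the defining expression for $W$ to $W = Y_0^{-1}S_f^TX(2\tau)\,Y_0$, so that $W$ is conjugate to $S_f^TX(2\tau)$ by $Y_0$. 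It follows that $Y_0^{-1}\dot\gamma(0)$ is an eigenvector of $W$ with eigenvalue $1$.

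The final step reads off the block structure. The first column of $Y_0$ in Equation \ref{matrixy0} is exactly $e_1$, so $Y_0e_1 = e_1$ and therefore $Y_0^{-1}\dot\gamma(0) = \tfrac{\sqrt2}{2}\alpha^4\,Y_0^{-1}e_1 = \tfrac{\sqrt2}{2}\alpha^4\,e_1$, giving $We_1 = e_1$. Writing $W = \left[\begin{smallmatrix} K^T & L_1 \\ L_2 & K \end{smallmatrix}\right]$ and $e_1 = \big((1,0,0)^T,(0,0,0)^T\big)^T$, the top block of $We_1 = e_1$ yields precisely $K^T(1,0,0)^T = (1,0,0)^T$, which is the claim; the bottom block gives the incidental identity $L_2(1,0,0)^T = 0$.

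The main obstacle is the opening computation: one must check carefully that all five remaining components of $\dot\gamma(0)$ vanish, with particular attention to the radical potential terms, where the quotient rule could in principle leave a surviving term. It does not, because the $Q_1^2$ numerator factor is never fully cancelled upon differentiation. Everything afterward is bookkeeping with the already-established conjugation $W = Y_0^{-1}S_f^TX(2\tau)Y_0$; the only remaining point is to confirm that $\alpha \neq 0$, so that $\dot\gamma(0)$ is a genuinely nonzero eigenvector, which holds for the numerically obtained orbit with $\alpha \approx 2.698$.
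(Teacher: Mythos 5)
Your argument follows the same route as the paper's proof: identify $\dot\gamma(0)$ as a solution of the linearized equations, use the conjugacy $W = Y_0^{-1}S_f^TX(2\tau)Y_0$ (the paper writes this as $W = Y_0^TS_f^TY(2\tau)$ with $Y(2\tau) = X(2\tau)Y_0$) to get $We_1 = e_1$, and read the claim off the block structure of $W$. The symmetry step, the conjugation, and the final bookkeeping are all correct, and deriving $\dot\gamma(2\tau) = S_f\dot\gamma(0)$ by differentiating the symmetry relation is cleaner than the paper's direct verification of the two tangent vectors.

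There is, however, a genuine flaw in your justification of the opening computation. You claim $\dot P_2(0) = \dot P_3(0) = 0$ because ``every term of $\Gamma$ that involves $Q_1$ does so through a factor $Q_1^2$,'' so each partial derivative retains a factor of $Q_1$. That reasoning only disposes of the terms that actually contain $Q_1$. Two terms of $\Gamma$ contain no $Q_1$ at all, namely $\tfrac{1}{4}P_1^2Q_2^2Q_3^2$ and $-\tfrac{1}{2}Q_2^2Q_3^2$, and their individual contributions to $\dot P_2 = -\partial\Gamma/\partial Q_2$ are $-\tfrac{1}{2}P_1^2Q_2Q_3^2$ and $+Q_2Q_3^2$, neither of which vanishes at $\gamma(0)$ where $Q_2 = Q_3 = \alpha \neq 0$. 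Their sum is $Q_2Q_3^2\bigl(1 - \tfrac{1}{2}P_1^2\bigr)$, which vanishes only because the collision condition (Equation \ref{regularized}) forces $P_1^2 = 2$ at $s = 0$; the same cancellation handles $\dot P_3(0)$. So the conclusion $\dot\gamma(0) = \tfrac{\sqrt{2}}{2}\alpha^4 e_1$ is correct (and your constant, not the paper's, is the right one given the stated $\Gamma$), but it requires invoking $P_1(0) = \sqrt{2}$, not merely the structure of the $Q_1$-dependence. With that repair the proof is complete.
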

\begin{proof}
Let $v = Y_0^{-1}\gamma'(0)/||\gamma'(0)|| = Y_0^{T}\gamma'(0)/||\gamma'(0)||$.  Since $Y_0$ is orthogonal and $S_r$ is symmetric, we have
\begin{equation}
W = Y_0^{-1}S_f^TS_rY_0B^{-1}S_rB = Y_0^{T}S_f^TS_rY_0B^{-1}S_r^TB = Y_0^TS_f^TY(2\tau)
\end{equation}
by Corollary \ref{RobertsRemark2} with $s = 0$. \\

Define $\gamma(s)$ to be the periodic orbit with initial conditions defined in Section \ref{secPeriodicOrbit}.  Since $\gamma'(s)$ is a solution to $\dot{\xi} = JD^2\Gamma(\gamma(s))\xi$ and $$\gamma'(0) = Y(0)Y_0^{-1}\gamma'(0) = Y(0)v,$$
then
\begin{equation}
\gamma'(s) = Y(s)Y_0^{-1}\gamma'(0) = Y(s)v.
\end{equation}
This implies
\begin{equation}
Y_0^{-1}S_f^T\gamma'(2\tau) = Y_0^TS_f^TY(2\tau)v = Wv.
\end{equation}
Since 
\begin{equation}
\gamma'(0) = (2\sqrt{2}\alpha^4, 0, 0, 0, 0, 0)
\end{equation}
and
\begin{equation}
\gamma'(2\tau) = (0, -2\sqrt{2}\alpha^4, 0, 0, 0, 0)
\end{equation}
with $\alpha$ as defined in Equation \ref{alphabeta}, we have
\begin{equation}
S_f^T\gamma'(2\tau) = \gamma'(0).
\end{equation}
Then $$Wv = Y_0^{-1}S_f^T\gamma'(2\tau) = Y_0^TS_f^TS_f\gamma'(0) = Y_0^TY_0v = v.$$
So $v$ is an eigenvector of $W$ with eigenvalue $1$. \\

Since $\gamma'(0)$ is known, we have that $v = Y_0^{-1}e_1$, where
\begin{equation}
e_1 = [1 \ 0 \ 0 \ 0 \ 0 \ 0]^T.
\end{equation}
Direct calculation gives that $v = e_1$.  Then, since $W$ satisfies the relation given in Lemma $\ref{Wform}$, $K^T$ must have eigenvector $[1 \ 0 \ 0]^T$ with eigenvalue $1$ as claimed.
\end{proof}

As a consequence, we know that the matrix $K$ must be of the form
\begin{equation}
K =
\begin{bmatrix}
1 & 0 & 0 \\
* & k_{22} & k_{23} \\
* & k_{32} & k_{33}
\end{bmatrix}
\end{equation}
and so the eigenvalues of the lower-right $2 \times 2$ block will determine stability.  Because of the form of $K$, entries marked with an $*$ are not needed for the stability calculation.

\section{Stability Results}
\label{secStabRes}

Using the matrix $Y_0$ from Equation \ref{matrixy0}, we find the matrix $B = Y(\tau)$ numerically with the initial conditions from Equation \ref{alphabeta}. Then the matrix $K$ is given numerically by
\begin{equation}
K = 
\begin{bmatrix}
0.99998 & -0.00002 & -0.00000 \\
-0.33061 & 0.46342 & 0.04988 \\
0.90212 & -1.96273 & -1.28477
\end{bmatrix}
\end{equation}
The values given for the first row entries are the result of propagation of numerical error in the calculation.  However, the small deviation from the values $1$ and $0$ gives confidence that the numerical calculation is correct.  Assuming the computed values are $1$ and $0$ zero as proven earlier, the eigenvalues from the lower-right $2 \times 2$ block of $K$ are given by a simple application of the quadratic formula.  We find
\begin{equation}
\lambda_1 = 0.40550, \quad \lambda_2 = -1.22685
\end{equation}
As a consequence of Lemma \ref{Wform}, we have the following
\begin{theorem}
The octahedral orbit described throughout this paper is linearly unstable.
\end{theorem}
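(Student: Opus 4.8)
The plan is to read off linear instability directly from the symmetry-reduced data assembled in Section~\ref{secStabSim}, without ever forming the full $6\times 6$ monodromy matrix. Recall that the entire stability question has already been compressed into the eigenvalues of the single matrix $W$, since $X(12\tau) = Y_0 W^6 Y_0^{-1}$ and similarity preserves eigenvalues. Because $Y_0$ from Equation~\ref{matrixy0} has been chosen so that $W$ takes the block form required by Lemma~\ref{Wform}, it suffices to examine the eigenvalues of $K$: the orbit can be linearly stable only if every eigenvalue of $K$ lies in the real interval $[-1,1]$. So the whole theorem reduces to locating three real numbers.

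First I would invoke Lemma~\ref{KEig} to peel off the eigenvalue forced by the Hamiltonian first integral. That lemma shows $K^T$, and hence $K$, has the eigenvalue $1$ with eigenvector $e_1$, so $K$ is block lower-triangular with the entry $1$ in the top-left corner and its two remaining eigenvalues supplied entirely by the lower-right $2\times 2$ block. This eigenvalue $1$ accounts for the expected unit-modulus pair in the monodromy matrix and contributes nothing to the stability verdict, so the argument narrows to the spectrum of that $2\times 2$ block.

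Next I would compute that block numerically. Concretely, this means integrating the variational equation $\dot{X} = JD^2\Gamma(\gamma(s))X$ with initial data $Y_0$ along a single twelfth of the orbit to obtain $B = Y(\tau)$, then forming $W = \Lambda D$ with $D = -B^{-1}S_rB$ and reading off $K$. Since the top-left corner is $1$ and the two top-right entries vanish (as proven), a single application of the quadratic formula to the lower-right block yields the remaining eigenvalues $\lambda_1 \approx 0.40550$ and $\lambda_2 \approx -1.22685$. The conclusion is then immediate: since $\lambda_2 \notin [-1,1]$, Lemma~\ref{Wform} guarantees that $W$ has an eigenvalue off the unit circle; raising to the sixth power and conjugating by $Y_0$ preserves this, so $X(12\tau)$ has an eigenvalue of modulus different from $1$, and the orbit fails the necessary condition for linear stability.

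The main obstacle is purely numerical, since the argument rests entirely on the accuracy of the integrated matrix $B$; the genuine work is in controlling error propagation in the Runge--Kutta integration of the variational equations across the collision-regularized trajectory. What makes the verdict safe is that $\lambda_2$ sits \emph{well} outside $[-1,1]$, near $-1.23$ rather than hovering at the boundary value $-1$, so the instability conclusion is robust to perturbations on the order of the small deviations already visible in the near-$1$ and near-$0$ entries of the first row of the computed $K$. A marginal eigenvalue near $\pm 1$ would demand a careful error bound, but here no such delicacy is required.
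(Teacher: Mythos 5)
Your proposal is correct and follows essentially the same route as the paper: reduce the monodromy matrix to $W = Y_0^{-1}S_f^TS_rY_0B^{-1}S_rB$ via Roberts's factorization, use Lemma \ref{Wform} and Lemma \ref{KEig} to isolate the lower-right $2\times 2$ block of $K$, and numerically obtain the eigenvalue $\lambda_2 \approx -1.22685 \notin [-1,1]$, which forces an eigenvalue of $X(12\tau)$ off the unit circle. Your added remark that the verdict is robust because $\lambda_2$ lies well away from the boundary of $[-1,1]$ matches the paper's own justification for trusting the numerics.
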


\section{Data Statement}

All data generated or analyzed during this study are included in this published article.

\section{Acknowledgements}

This section will be completed later.  Any content here will not affect the core mathematical content presented in the prior sections.

\bibliographystyle{plain}

\bibliography{OctahedralNotesDraft2}

\begin{thebibliography}{10}

\bibitem{bibBS1}
Lennard Bakker and Skyler Simmons.
\newblock Stability of the rhomboidal symmetric-mass orbit.
\newblock {\em Discrete Contin. Dyn. Syst.}, 35(1):1--23, 2015.

\bibitem{bibBMS}
Lennard~F. Bakker, Scott Mancuso, and Skyler~C. Simmons.
\newblock Linear stability for some symmetric periodic simultaneous binary
  collision orbits in the planar pairwise symmetric four-body problem.
\newblock {\em J. Math. Anal. Appl.}, 392(2):136--147, 2012.

\bibitem{bibBOYS}
Lennard~F. Bakker, Tiancheng Ouyang, Duokui Yan, and Skyler Simmons.
\newblock Existence and stability of symmetric periodic simultaneous binary
  collision orbits in the planar pairwise symmetric four-body problem.
\newblock {\em Celestial Mech. Dynam. Astronom.}, 110(3):271--290, 2011.

\bibitem{bibBOYSR}
Lennard~F. Bakker, Tiancheng Ouyang, Duokui Yan, Skyler Simmons, and Gareth~E.
  Roberts.
\newblock Linear stability for some symmetric periodic simultaneous binary
  collision orbits in the four-body problem.
\newblock {\em Celestial Mech. Dynam. Astronom.}, 108(2):147--164, 2010.

\bibitem{bibBV}
L\'{u}cia de~Fatima Brand\~{a}o and Claudio Vidal.
\newblock Periodic solutions of the elliptic isosceles restricted three-body
  problem with collision.
\newblock {\em J. Dynam. Differential Equations}, 20(2):377--423, 2008.

\bibitem{bibBDV}
L\'{u}cia Brand\~{a}o Dias, Joaqu\'{\i}n Delgado, and Claudio Vidal.
\newblock Dynamics and chaos in the elliptic isosceles restricted three-body
  problem with collision.
\newblock {\em J. Dynam. Differential Equations}, 29(1):259--288, 2017.

\bibitem{bibBroucke}
Roger Broucke.
\newblock On the isosceles triangle configuration in the planar general three
  body problem.
\newblock {\em Astronomy \& Astrophysics}, 73(3):303--313, 1979.

\bibitem{bibChencinerMontgomery}
Alain Chenciner and Richard Montgomery.
\newblock A remarkable periodic solution of the three-body problem in the case
  of equal masses.
\newblock {\em Ann. of Math. (2)}, 152(3):881--901, 2000.

\bibitem{bibGPSV}
Marcel Guardia, Jaime Paradela, Tere~M. Seara, and Claudio Vidal.
\newblock Symbolic dynamics in the restricted elliptic isosceles three body
  problem.
\newblock {\em J. Differential Equations}, 294:143--177, 2021.

\bibitem{bibHE}
M.~H\'enon.
\newblock Stability of interplay orbits.
\newblock {\em Cel. Mech.}, 15:243--261, 1977.

\bibitem{bibHM}
Jarmo Hietarinta and Seppo Mikkola.
\newblock Chaos in the one-dimensional gravitational three-body problem.
\newblock {\em Chaos}, 3(2):183--203, 1993.

\bibitem{bibHuang}
Hsin-Yuan Huang.
\newblock Schubart-like orbits in the {N}ewtonian collinear four-body problem:
  a variational proof.
\newblock {\em Discrete Contin. Dyn. Syst.}, 32(5):1763--1774, 2012.

\bibitem{bibKTXY}
Wentian Kuang, Tiancheng Ouyang, Zhifu Xie, and Duokui Yan.
\newblock The {B}roucke-{H}\'{e}non orbit and the {S}chubart orbit in the
  planar three-body problem with two equal masses.
\newblock {\em Nonlinearity}, 32(12):4639--4664, 2019.

\bibitem{bibLeviCivita}
T.~Levi-Civita.
\newblock Sur la r\'{e}gularisation du probl\`eme des trois corps.
\newblock {\em Acta Math.}, 42(1):99--144, 1920.

\bibitem{bibMartinez}
Regina Mart{\'{\i}}nez.
\newblock On the existence of doubly symmetric ``{S}chubart-like'' periodic
  orbits.
\newblock {\em Discrete Contin. Dyn. Syst. Ser. B}, 17(3):943--975, 2012.

\bibitem{bibMcGehee1}
Richard McGehee.
\newblock A stable manifold theorem for degenerate fixed points with
  applications to celestial mechanics.
\newblock {\em J. Differential Equations}, 14:70--88, 1973.

\bibitem{bibMISC}
R.~Moeckel.
\newblock Heteroclinic phenomena in the isosceles three-body problem.
\newblock {\em SIAM J. Math. Anal.}, 15(5):857--876, 1984.

\bibitem{bibMoeckel}
Richard Moeckel.
\newblock A topological existence proof for the {S}chubart orbits in the
  collinear three-body problem.
\newblock {\em Discrete Contin. Dyn. Syst. Ser. B}, 10(2-3):609--620, 2008.

\bibitem{bibMoore}
Cristopher Moore.
\newblock Braids in classical dynamics.
\newblock {\em Phys. Rev. Lett.}, 70(24):3675--3679, 1993.

\bibitem{bibNewton}
I.~S. Newton.
\newblock {\em Philosophiae naturalis principia mathematica}.
\newblock William Dawson \& Sons, Ltd., London, undated.

\bibitem{bibOY}
Tiancheng Ouyang and Duokui Yan.
\newblock Simultaneous binary collisions in the equal-mass collinear four-body
  problem.
\newblock {\em Electron. J. Differential Equations}, pages No. 80, 34, 2015.

\bibitem{bibOY3}
Tiancheng Ouyang, Duokui Yan, and Skyler Simmons.
\newblock Periodic solutions with singularities in two dimensions in the
  $n$-body problem.
\newblock {\em Rocky Mtn. J. Math.}, 42(4):1601--1614, 2012.

\bibitem{bibRoberts8}
Gareth~E. Roberts.
\newblock Linear stability analysis of the figure-eight orbit in the three-body
  problem.
\newblock {\em Ergodic Theory Dynam. Systems}, 27(6):1947--1963, 2007.

\bibitem{bibRoySteves}
Archie~E. Roy and Bonnie~A. Steves.
\newblock The {C}aledonian symmetrical double binary four-body problem. {I}.
  {S}urfaces of zero-velocity using the energy integral.
\newblock {\em Celestial Mech. Dynam. Astronom.}, 78(1-4):299--318 (2001),
  2000.

\bibitem{bibST1}
M.~Saito and K.~Tanikawa.
\newblock The rectilinear three-body problem using symbol sequence i. role of
  triple collisions.
\newblock {\em Celest. Mech. Dynam. Astron.}, 98:95--120, 2007.

\bibitem{bibST2}
M.~Saito and K.~Tanikawa.
\newblock The rectilinear three-body problem using symbol sequence ii: Role of
  periodic orbits.
\newblock {\em Celest. Mech. Dynam. Astron.}, 103:191--207, 2009.

\bibitem{bibST3}
M.~Saito and K.~Tanikawa.
\newblock Non-schubart periodic orbits in the rectilinear three-body problem.
\newblock {\em Celest. Mech. Dynam. Astron.}, 107:397--407, 2010.

\bibitem{bibJS}
J.~Schubart.
\newblock Numerische {A}ufsuchung periodischer {L}\"osungen im
  {D}reik\"orperproblem.
\newblock {\em Astr. Nachr.}, 283:17--22, 1956.

\bibitem{bibShib}
Mitsuru Shibayama.
\newblock Minimizing periodic orbits with regularizable collisions in the
  {$n$}-body problem.
\newblock {\em Arch. Ration. Mech. Anal.}, 199(3):821--841, 2011.

\bibitem{bibSim}
Skyler Simmons.
\newblock Stability of {B}roucke's isosceles orbit.
\newblock {\em Discrete Contin. Dyn. Syst.}, 41(8):3759--3779, 2021.

\bibitem{bibSimCubic}
Skyler Simmons.
\newblock A new collision-based periodic orbit in the three-dimensional
  eight-body problem.
\newblock {\em Celestial Mech. Dynam. Astronom.}, 134(6):Paper No. 54, 21,
  2022.

\bibitem{bibSSS}
A.~Sivasankaran, B.~Steves, and W.~Sweatman.
\newblock A global regularisation for integrating the caledonian symmetric
  four-body problem.
\newblock {\em Celest. Mech. Dynam. Astron.}, 107:157--168, 2010.

\bibitem{bibVE}
Andrea Venturelli.
\newblock A variational proof of the existence of von {S}chubart's orbit.
\newblock {\em Discrete Contin. Dyn. Syst. Ser. B}, 10(2-3):699--717, 2008.

\bibitem{bibWaldvogel}
J\"{o}rg Waldvogel.
\newblock The rhomboidal symmetric four-body problem.
\newblock {\em Celestial Mech. Dynam. Astronom.}, 113(1):113--123, 2012.

\bibitem{bibYan1}
Duokui Yan.
\newblock Existence and linear stability of the rhomboidal periodic orbit in
  the planar equal mass four-body problem.
\newblock {\em J. Math. Anal. Appl.}, 388(2):942--951, 2012.

\bibitem{bibYan2}
Duokui Yan.
\newblock Existence of the {B}roucke periodic orbit and its linear stability.
\newblock {\em J. Math. Anal. Appl.}, 389(1):656--664, 2012.

\bibitem{bibYan3}
Duokui Yan.
\newblock A simple existence proof of {S}chubart periodic orbit with arbitrary
  masses.
\newblock {\em Front. Math. China}, 7(1):145--160, 2012.

\end{thebibliography}

\end{document}